\documentclass[12pt]{amsproc}%
\usepackage{hyperref}
\usepackage{geometry}
\usepackage{graphicx}
\usepackage{amsthm}
\usepackage{amssymb}
\usepackage{amsmath}
\usepackage{amsfonts}%
\setcounter{MaxMatrixCols}{30}
\providecommand{\U}[1]{\protect\rule{.1in}{.1in}}
\geometry{
a4paper,
total={170mm,257mm},
left=30mm,
right=30mm,
top=30mm,
bottom=25mm,
}

\theoremstyle{plain}
\newtheorem{thm}{Theorem}[section]
\newtheorem{cor}[thm]{Corollary}
\newtheorem{lem}[thm]{Lemma}
\newtheorem{prop}[thm]{Proposition}
\newtheorem{defn}[thm]{Definition}
\newtheorem{exa}[thm]{Example}
\newtheorem{rem}[thm]{Remark}

\begin{document}
\title{On Graded $\phi$-$1$-absorbing prime ideals}
\author{Mashhoor \textsc{Refai}}
\address{President of Princess Sumaya University for Technology, Amman, Jordan}
\email{m.refai@psut.edu.jo}
\author{Rashid \textsc{Abu-Dawwas}}
\address{Department of Mathematics, Yarmouk University, Irbid, Jordan}
\email{rrashid@yu.edu.jo}
\author{\"{U}nsal \textsc{Tekir}}
\address{Department of Mathematics, Marmara University, Istanbul, Turkey}
\email{utekir@marmara.edu.tr}
\author{Suat \textsc{Ko\c{c}}}
\address{Department of Mathematics, Marmara University, Istanbul, Turkey}
\email{suat.koc@marmara.edu.tr}
\author{Roa'a \textsc{Awawdeh}}
\address{Department of Mathematics, Yarmouk University, Irbid, Jordan}
\email{2018105027@ses.yu.edu.jo}
\author{Eda \textsc{Yildiz}}
\address{Department of Mathematics, Yildiz Technical University, Istanbul, Turkey}
\email{edyildiz@yildiz.edu.tr}
\subjclass[2010]{Primary 13A02; Secondary 16W50}
\keywords{Graded $\phi$-prime ideal; graded $1$-absorbing prime ideal; graded $\phi
$-1-absorbing prime ideal.}

\begin{abstract}
Let $G$ be a group, $R$ be a $G$-graded commutative ring with nonzero unity
and $GI(R)$ be the set of all graded ideals of $R$. Suppose that
$\phi:GI(R)\rightarrow GI(R)\cup\{\emptyset\}$ is a function. In this article,
we introduce and study the concept of graded $\phi$-$1$-absorbing prime
ideals. A proper graded ideal $I$ of $R$ is called a graded $\phi$%
-$1$-absorbing prime ideal of $R$ if whenever $a,b,c$ are homogeneous nonunit
elements of $R$ such that $abc\in I-\phi(I)$, then $ab\in I$ or $c\in I$.
Several properties of graded $\phi$-$1$-absorbing prime ideals have been examined.

\end{abstract}
\maketitle

\section{Introduction}

Throughout this article, $G$ will be a group with identity $e$ and $R$ be a
commutative ring having a nonzero unity $1$. Then $R$ is called a
$G$\textit{-graded ring} if $R=\bigoplus_{g\in G}R_{g}$ with $R_{g}%
R_{h}\subseteq R_{gh}$ for all $g,h\in G$ where $R_{g}$ is an additive
subgroup of $R$ for all $g\in G$. The elements of $R_{g}$ are called
\textit{homogeneous of degree }$g$. If $a\in R$, then $a$ can be written
uniquely as a finite sum $\sum_{g\in G}a_{g}$, where $a_{g}$ is the component
of $a$ in $R_{g}$. Note that $R_{e}$ is a subring of $R$ and $1\in R_{e}$. The
set of all homogeneous elements of $R$ is denoted by $h(R)=\bigcup_{g\in
G}R_{g}$. Let $P$ be an ideal of a graded ring $R$. Then $P$ is called a
\textit{graded ideal} if $P=\bigoplus_{g\in G}(P\cap R_{g})$, or equivalently,
$a=\sum_{g\in G}a_{g}\in P\ $implies that $a_{g}\in P$ for all $g\in G$. It is
not necessary that every ideal of a graded ring is a graded ideal. For
instance, let $R=k[X]\ $where $k$ is a field. Then $R\ $is a $%
\mathbb{Z}
$-graded ring where $R_{n}=0$ if $n<0,\ R_{0}=k\ $and $R_{n}=kX^{n}\ $if
$n>0.$\ Then $I=(X+1)\ $is not a graded ideal since $1+X\in I$ but $1\notin
I.\ $ We will denote the set of all graded ideals of $R$ by $GI(R)$. For more
details and terminology, see \cite{Hazart, Nastasescue}.

For many years, various classes of graded ideals have been established such as
graded prime, graded primary, graded absorbing ideals, and etc. All of them
play an important performance when characterizing graded rings. The concept of
graded prime ideals and its generalizations have an important place in graded
commutative algebra since they are used in recognizing the structure of graded
rings. Recall that a proper graded ideal $I$ of $R$ is said to be a
\textit{graded prime ideal} if whenever $a,b\in h(R)$ such that $ab\in I$,
then either $a\in I$ or $b\in I$ (\cite{Refai Hailat}). The significance of
graded prime ideals led many researchers to work on graded prime ideals and
its generalizations. See for example, \cite{Dawwas, Bataineh Dawwas, Refai
Zoubi}. In \cite{Atani Prime}, Atani introduced the notion of graded weakly
prime ideal which is a generalization of graded prime ideals. A proper graded
ideal $I$ of $R$ is said to be a \textit{graded weakly prime ideal} of $R$ if
whenever $a,b\in h(R)$ such that $0\neq ab\in I$, then $a\in I$ or $b\in I$.
It is obvious that every graded prime ideal is graded weakly prime but the
converse is not true in general. For instance, consider the $%
\mathbb{Z}
$-graded ring $R=%
\mathbb{Z}
_{4}[X]\ $and the ideal $I=(0).$\ Then $I\ $is clearly a graded weakly prime
ideal. However, $I\ $is not a graded prime ideal since $\overline{2}%
\cdot\overline{2}X=\overline{0}\ $but $\overline{2}\ $and $\overline{2}X\notin
I.\ $ Later, Al-Zoubi, Abu-Dawwas and Ceken in \cite{Zoubi Dawwas Ceken}
introduced the notion of graded $2$-absorbing ideals. A nonzero proper graded
ideal $I$ of $R$ is called a\textit{ graded }$2$\textit{-absorbing ideal} if
$abc\in I$ implies $ab\in I$ or $ac\in I$ or $bc\in I$ for each $a,b,c\in
h(R)$. Note that every graded prime ideal is also a graded $2$-absorbing
ideal. After this, graded $2$-absorbing version of graded ideals and many
generalizations of graded $2$-absorbing ideals attracted considerable
attention by many researchers in \cite{Dawwas Bataineh Shashan, Soheilnia
Darani, RaTeShKo}. In \cite{Jaber Bataineh Khashan}, the authors defined the
notion of graded almost prime ideals. A proper graded ideal $I$ of $R$ is said
to be \textit{graded almost prime} if for $a,b\in h(R)$ such that $ab\in
I-I^{2}$, then either $a\in I$ or $b\in I$. Also, in \cite{Zoubi Dawwas
Ceken}, the authors defined and studied graded \textit{weakly }$2$%
\textit{-absorbing ideals} which is a generalization of graded weakly prime
ideals. A proper graded ideal $I$ of $R$ is called a \textit{graded weakly
}$2$\textit{-absorbing ideal} if $0\neq abc\in I$ implies $ab\in I$ or $ac\in
I$ or $bc\in I$ for each $a,b,c\in h(R)$. In \cite{Alshehry}, Alshehry and
Abu-Dawwas defined a new class of graded prime ideals. A proper graded ideal
$I$ of $R$ is called \textit{graded }$\phi$\textit{-prime ideal} if whenever
$ab\in I-\phi(I)$ for some $a,b\in h(R)$, then either $a\in I$ or $b\in I$,
where $\phi:GI(R)\rightarrow GI(R)\cup\{\emptyset\}$ is a function. They
proved that a graded prime ideal and a graded $\phi$-prime ideal have some
similar properties.

Recently, in \cite{Dawwas Yildiz}, the notion of graded $1$-absorbing prime
ideals has been introduced and studied. This class of graded ideals is a
generalization of graded prime ideals. A proper graded ideal $I$ of $R$ is
called a \textit{graded }$1$\textit{-absorbing prime ideal} if whenever
$abc\in I$ for some nonunits $a,b,c\in h(R)$, then either $ab\in I$ or $c\in
I$. Note that every graded prime ideal is graded $1$-absorbing prime and every
graded $1$-absorbing prime ideal is graded $2$-absorbing ideal. The converses
are not true. More currently, in \cite{Tekir Koc Dawwas Yildiz}, the notion of
graded weakly $1$-absorbing prime ideals which is a generalization of graded
$1$-absorbing prime ideals has been introduced and investigated. A proper
graded ideal $I$ of $R$ is called a \textit{graded weakly }$1$%
\textit{-absorbing prime ideal} if whenever $0\neq abc\in I$ for some nonunits
$a,b,c\in h(R)$, then either $ab\in I$ or $c\in I$.

In this article, we act in accordance with \cite{Yildiz Tekir Koc} to define
and study graded $\phi$-$1$-absorbing prime ideals as a new class of graded
ideals which is a generalization of graded $1$-absorbing prime ideals. A
proper graded ideal $I$ of $R$ is called a \textit{graded }$\phi$\textit{-}%
$1$\textit{-absorbing prime ideal} of $R$ if whenever $a,b,c\in h(R)$ are
nonunits such that $abc\in I-\phi(I)$, then $ab\in I$ or $c\in I$. Among
several results, an example of a graded weakly $1$-absorbing prime ideal that
is not graded $1$-absorbing prime has been given (Example \ref{Example 2}).
Also, an example of a graded weakly $1$-absorbing prime ideal that is not
graded weakly prime has been introduced (Example \ref{Example 4}). In Theorem
\ref{Theorem 1}, we give a characterization on graded $\phi$-$1$-absorbing
prime ideals. We introduce the concept of $g$-$\phi$-$1$-absorbing prime
ideals. A graded ideal $I$ of $R$ with $I_{g}\neq R_{g}$ is said to be a
$g$\textit{-}$\phi$\textit{-}$1$\textit{-absorbing prime ideal of }$R$ if
whenever $a,b,c\in R_{g}$ such that $abc\in I$, then either $ab\in I$ or $c\in
I$. In Theorem \ref{Theorem 1 (1)}, we give a characterization of $g$-$\phi
$-$1$-absorbing prime ideals.  We show
that if $I$ is a graded $\phi$-$1$-absorbing prime ideal of $R$, then
$I/\phi(I)$ is a graded weakly $1$-absorbing prime ideal of $R/\phi(I)$
(Theorem \ref{Theorem 5 (i)}). On the other hand, we prove that if $I/\phi(I)$
is a graded weakly $1$-absorbing prime ideal of $R/\phi(I)$ and $U(R/\phi
(I))=\left\{  a+\phi(I):a\in U(R)\right\}  $, then $I$ is a graded $\phi$%
-$1$-absorbing prime ideal of $R$ (Theorem \ref{Theorem 5 (ii)}). In Theorem
\ref{Theorem 6}, we study graded $\phi$-$1$-absorbing prime ideals over
multiplicative sets. In Theorems \ref{Theorem 7 (1)}, \ref{Theorem 7 (2)} and
\ref{Theorem 7 (3)}, we study graded $\phi$%
-$1$-absorbing prime ideals over cartesian products of graded rings. Finally,
we introduce and study the concept of graded von Neumann regular rings. A
graded ring $R$ is said to be a \textit{graded von Neumann regular ring} if
for each $a\in R_{g}$, there exists $x\in R_{g^{-1}}$ such that $a=a^{2}x$ \cite{Nastasescue}. In
particular, we prove that if $R$ is a graded von Neumann regular ring and
$x\in h(R)$, then $Rx$ is a graded almost $1$-absorbing prime ideal of $R$
(Theorem \ref{5}).

\section{Graded $\phi$-$1$-absorbing prime ideals}

In this section, we introduce and study the concept of graded $\phi$%
-$1$-absorbing prime ideals.

\begin{defn}
Let $R$ be a graded ring and $\phi:GI(R)\rightarrow GI(R)\cup\{\emptyset\}$ be
a function. A proper graded ideal $I$ of $R$ is called a graded $\phi$%
-$1$-absorbing prime ideal of $R$ if whenever $a,b,c\in h(R)$ are nonunits
such that $abc\in I-\phi(I)$, then $ab\in I$ or $c\in I$.
\end{defn}

\begin{rem}
\label{Example 1} The following notations are used for the rest of the
article, they are types of graded $1$-absorbing prime ideals corresponding to
$\phi_{\alpha}$.

\begin{enumerate}
\item $\phi_{\emptyset}(I)=\emptyset$ (graded $1$-absorbing prime ideal)

\item $\phi_{0}(I)=\{0\}$ (graded weakly $1$-absorbing prime ideal)

\item $\phi_{1}(I)=I$ (any graded ideal)

\item $\phi_{2}(I)=I^{2}$ (graded almost $1$-absorbing prime ideal)

\item $\phi_{n}(I)=I^{n}$ (graded $n$-almost $1$-absorbing prime ideal)

\item $\phi_{\omega}(I)=\bigcap_{n=1}^{\infty}I^{n}$ (graded $\omega$%
-$1$-absorbing prime ideal)
\end{enumerate}
\end{rem}

\begin{rem}
(1)\ Since $I-\phi(I)=I-(I\bigcap\phi(I))$ for any graded ideal $I$, without
loss of generality, throughout this article, we suppose that $\phi(I)\subseteq
I$.

(2) For functions $\phi,\psi:GI(R)\rightarrow GI(R)\cup\{\emptyset\}$, we
write $\phi\leq\psi$ if $\phi(I)\subseteq\psi(I)$ for all $I\in GI(R)$.
Obviously, therefore, we have the next order:
\end{rem}

\begin{rem}
$\phi_{\emptyset}\leq\phi_{0}\leq\phi_{\omega}\leq\cdots\leq\phi_{n+1}\leq
\phi_{n}\leq\cdots\leq\phi_{2}\leq\phi_{1}$.
\end{rem}

\begin{prop}
\label{Proposition 1}Let $R$ be a graded ring, $\phi,\psi:GI(R)\rightarrow
GI(R)\cup\{\emptyset\}$ be two functions with $\phi\leq\psi$ and $I$ be a
proper graded ideal of $R$.

\begin{enumerate}
\item If $I$ is a graded $\phi$-$1$-absorbing prime ideal of $R$, then $I$ is
a graded $\psi$-$1$-absorbing prime ideal of $R$.

\item $I$ is a graded $1$-absorbing prime ideal of $R\Rightarrow$ $I$ is a
graded weakly $1$-absorbing prime ideal of $R\Rightarrow I$ is a graded
$\omega$-$1$-absorbing prime ideal of $R\Rightarrow I$ is a graded $n$-almost
$1$-absorbing prime ideal of $R$ for each $n\geq2\Rightarrow I$ is a graded
almost $1$-absorbing prime ideal of $R$.

\item $I$ is a graded $n$-almost $1$-absorbing prime ideal of $R$ for each
$n\geq2$ if and only if $I$ is a graded $\omega$-$1$-absorbing prime ideal of
$R$.

\item Every graded $\phi$-prime ideal of $R$ is a graded $\phi$-$1$-absorbing
prime ideal of $R$.
\end{enumerate}
\end{prop}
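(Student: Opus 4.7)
The plan is to treat the four items separately, all of them being short unpackings of the definitions, and to use the order chain $\phi_{\emptyset}\leq\phi_{0}\leq\phi_{\omega}\leq\cdots\leq\phi_{n+1}\leq\phi_{n}\leq\cdots\leq\phi_{2}\leq\phi_{1}$ recorded just above the statement.

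For part (1) I would argue directly: assume $I$ is graded $\phi$-$1$-absorbing prime and take homogeneous nonunits $a,b,c\in h(R)$ with $abc\in I-\psi(I)$. Since $\phi(I)\subseteq\psi(I)$, the element $abc$ also lies in $I-\phi(I)$, and the hypothesis supplies $ab\in I$ or $c\in I$. Part (2) is then immediate: each of the five arrows is an instance of part (1) applied to one of the comparisons $\phi_{\emptyset}\leq\phi_{0}$, $\phi_{0}\leq\phi_{\omega}$, $\phi_{\omega}\leq\phi_{n}$, $\phi_{n+1}\leq\phi_{n}$, and $\phi_{n}\leq\phi_{2}$ (taking $n\geq 2$ for the last two).

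For part (3), the implication $(\Leftarrow)$ is already contained in part (2), so the content is $(\Rightarrow)$. I would take homogeneous nonunits $a,b,c$ with $abc\in I-\phi_{\omega}(I)=I-\bigcap_{n\geq 1}I^{n}$. By definition of the intersection there is some index $m$ with $abc\notin I^{m}$; since $abc\in I$ we must have $m\geq 2$, so $abc\in I-I^{m}=I-\phi_{m}(I)$. Applying the $m$-almost $1$-absorbing prime hypothesis yields $ab\in I$ or $c\in I$, which is what is needed.

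For part (4) I would use that homogeneity is preserved under products, so $ab\in h(R)$ whenever $a,b\in h(R)$. Given nonunit homogeneous $a,b,c$ with $abc=(ab)c\in I-\phi(I)$, the graded $\phi$-prime property applied to the pair $(ab,c)$ of homogeneous elements delivers $ab\in I$ or $c\in I$; note that the nonunit hypothesis of $\phi$-$1$-absorbing prime is not needed on the $\phi$-prime side, so the implication goes through without any obstruction. I do not expect any step above to be a real obstacle; the only mildly delicate point is remembering to verify $m\geq 2$ in part (3) so that one is entitled to use the $n$-almost hypothesis.
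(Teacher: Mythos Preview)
Your proposal is correct and mirrors the paper's own proof: part (1) is an immediate unpacking of $I-\psi(I)\subseteq I-\phi(I)$, part (2) follows from (1) and the chain $\phi_{\emptyset}\leq\phi_{0}\leq\phi_{\omega}\leq\cdots\leq\phi_{2}$, part (3) uses the same ``pick an index $m$ with $abc\notin I^{m}$'' argument (the paper simply asserts $r\geq 2$ without comment), and part (4) is treated as obvious in the paper just as you do by applying the $\phi$-prime condition to the pair $(ab,c)$.
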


\begin{proof}
$(1):\ $It is clear.

$(2):\ $It follows from (1) and $\phi_{\emptyset}\leq\phi_{0}\leq\phi_{\omega
}\leq\cdots\leq\phi_{n+1}\leq\phi_{n}\leq\cdots\leq\phi_{2}\leq\phi_{1}$ in
Remark \ref{Example 1}.

$(3):\ $By (2), if $I$ is a graded $\omega$-$1$-absorbing prime ideal of $R$,
then $I$ is a graded $n$-almost $1$-absorbing prime ideal of $R$ for each
$n\geq2$. Assume that $I$ is a graded $n$-almost $1$-absorbing prime ideal of
$R$ for each $n\geq2$. Let $abc\in I-\bigcap_{n=1}^{\infty}I^{n}$ for some
nonunits $a,b,c\in h(R)$. Then there exists $r\geq2$ such that $abc\notin
I^{r}$. Since $I$ is a graded $r$-almost $1$-absorbing prime ideal of $R$ and
$abc\in I-I^{r}$, then either we have $ab\in I$ or $c\in I$.

$(4):\ $It is obvious.
\end{proof}

The next example introduces a graded weakly $1$-absorbing prime ideal that is
not a graded $1$-absorbing prime.

\begin{exa}
\label{Example 2}Consider $R=\mathbb{Z}_{pq^{2}}[i]$, where $p$, $q$ are two
distinct primes, and $G=\mathbb{Z}_{2}$. Then $R$ is $G$-graded by
$R_{0}=\mathbb{Z}_{pq^{2}}$ and $R_{1}=i\mathbb{Z}_{pq^{2}}$. As $\overline
{q}^{2}\in R_{0}$, $I=\langle\overline{q}^{2}\rangle$ is a graded ideal of
$R$. Since $\overline{p}, \overline{q}\in R_{0}\subseteq h(R)$ are nonunits
with $\overline{pqq}\in I$ while $\overline{pq}\notin I$ and $\overline
{q}\notin I$, $I$ is not a graded $1$-absorbing prime ideal of $R$. On the
other hand, we prove that $I$ is a graded weakly $1$-absorbing prime ideal of
$R$. Let $\overline{0}\neq\overline{abc}\in I$ for some nonunits $\overline
{a}, \overline{b}, \overline{c}\in h(R)$. Then $q^{2}$ divides $abc$ but
$pq^{2}$ does not divide $abc$.

\underline{Case (1):} $\overline{a}, \overline{b}, \overline{c}\in R_{0}$.

Since $\overline{a}, \overline{b}, \overline{c}$ are nonunits, $p$ or $q$ must
divide $a$, $b$ and $c$. If $p$ divides $a, b $ or $c$, then $pq^{2}$ divides
$abc$ which is a contradiction. So, $q^{2}$ divides $ab$ and so $\overline
{ab}\in I$. Therefore, $I$ is a graded weakly $1$-absorbing prime ideal of $R$.

\underline{Case (2):} $\overline{a}, \overline{b}\in R_{0}, \overline{c}\in
R_{1}$.

In this case, $\overline{c}=i\overline{\alpha}$ for some $\overline{\alpha}\in
R_{0}$. As $\overline{c}$ is nonunit, $\overline{\alpha}$ is nonunit with
$abc=iab\alpha$ and $pq^{2}$ does not divide $ab\alpha$. Since $q^{2}$ divides
$abc$, $iab\alpha=q^{2}(x+iy)$ for some $x, y\in R_{0}$, and then
$ab\alpha=q^{2}y$ which implies that $q^{2}$ divides $ab\alpha$. Similarly as
in case (1), we have that $\overline{ab}\in I$. Therefore, $I$ is a graded
weakly $1$-absorbing prime ideal of $R$.

\underline{Case (3):} $\overline{a}\in R_{0}$, $\overline{b}, \overline{c}\in
R_{1}$.

In this case, $\overline{b}=i\overline{\alpha}$ and $\overline{c}%
=i\overline{\beta}$ for some $\overline{\alpha}, \overline{\beta}\in R_{0}$.
As $\overline{b}$ and $\overline{c}$ are nonunits, $\overline{\alpha}$ and
$\overline{\beta}$ are nonunits with $abc=-a\alpha\beta$ and $pq^{2}$ does not
divide $a\alpha\beta$. Since $q^{2}$ divides $abc$, $-a\alpha\beta
=q^{2}(x+iy)$ for some $x, y\in R_{0}$, and then $-a\alpha\beta=q^{2}x$ which
implies that $q^{2}$ divides $a\alpha\beta$. Similarly as in case (1), we have
that $\overline{a\alpha}\in I$ and then $\overline{ab}\in I$. Therefore, $I$
is a graded weakly $1$-absorbing prime ideal of $R$.

\underline{Case (4):} $\overline{a}, \overline{b}, \overline{c}\in R_{1}$.

In this case, $\overline{a}=i\overline{\alpha}$, $\overline{b}=i\overline
{\beta}$ and $\overline{c}=i\overline{\gamma}$ for some $\overline{\alpha
},\overline{\beta},\overline{\gamma}\in R_{0}$. As $\overline{a}$,
$\overline{b}$ and $\overline{c}$ are nonunits, $\overline{\alpha}$,
$\overline{\beta}$ and $\overline{\gamma}$ are nonunits with $abc=-i\alpha
\beta\gamma$ and $pq^{2}$ does not divide $\alpha\beta\gamma$. Since $q^{2}$
divides $abc$, $-i\alpha\beta\gamma=q^{2}(x+iy)$ for some $x,y\in R_{0}$, and
then $-\alpha\beta\gamma=q^{2}y$ which implies that $q^{2}$ divides
$\alpha\beta\gamma$. Similarly as in case (1), we have that $\overline
{\alpha\beta}\in I$ and then $\overline{ab}\in I$. Therefore, $I$ is a graded
weakly $1$-absorbing prime ideal of $R$.

Since the other cases are similar to one of the above cases, $I\ $is a graded
weakly 1-absorbing prime ideal of $R.$
\end{exa}

The next example introduces a graded $\omega$-$1$-absorbing prime ideal that
is not graded weakly $1$-absorbing prime.

\begin{exa}
\label{Example 3}Consider $R=\mathbb{Z}_{2}\times\mathbb{Z}_{2}\times
\mathbb{Z}_{2}\times\mathbb{Z}_{2}$ and the trivial graduation of $R$ by any
group $G$, that is $R_{e}=R$ and $R_{g}=\{0\}$ for $g\in G-\{e\}$. Now,
$I=\mathbb{Z}_{2}\times\{\overline{0}\}\times\{\overline{0}\}\times
\{\overline{0}\}$ is a graded ideal of $R$ satisfies $I^{2}=I$, and then
$I^{n}=I$ for all $n\geq2$, and hence $I$ is a graded $\omega$-$1$-absorbing
prime ideal of $R$. On the other hand, $I$ is not a graded weakly
$1$-absorbing prime ideal of $R$ since $a=(\overline{1},\overline{1}%
,\overline{1},\overline{0}),b=(\overline{1},\overline{1},\overline
{0},\overline{1})$ and $c=(\overline{1},\overline{0},\overline{1},\overline
{1})\in h(R)$ are nonunits with $0\neq abc\in I$ while $ab,c\notin I$.
\end{exa}

The next example introduces a graded weakly $1$-absorbing prime ideal that is
not graded weakly prime.

\begin{exa}
\label{Example 4}Consider $R=\mathbb{Z}_{pq^{2}}[i]$, where $p$, $q$ are two
distinct primes, and $G=\mathbb{Z}_{2}$. Then $R$ is $G$-graded by
$R_{0}=\mathbb{Z}_{pq^{2}}$ and $R_{1}=i\mathbb{Z}_{pq^{2}}$. By Example
\ref{Example 2}, $I=\langle\overline{q}^{2}\rangle$ is a graded weakly
$1$-absorbing prime ideal of $R$. On the other hand, $I$ is not a graded
weakly prime ideal of $R$ since $\overline{q}\in h(R)$ with $\overline{0}%
\neq\overline{qq}\in I$ while $\overline{q}\notin I$.
\end{exa}

A graded ring $R$ is said to be graded local if it has a unique graded maximal
ideal $\mathfrak{m}$, and it is denoted by $(R,\mathfrak{m})$.

\begin{prop}
\label{Remark 1} Let $(R,\mathfrak{m})$ be a graded local ring and $I$ be a
proper graded ideal of $R$. If $\mathfrak{m}^{2}\subseteq I$, then $I$ is a
graded $1$-absorbing prime ideal of $R$.
\end{prop}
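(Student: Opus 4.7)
The plan is to show the conclusion essentially for free: once $a,b,c$ are homogeneous nonunits, the containment $\mathfrak{m}^{2}\subseteq I$ forces $ab\in I$ before one even uses that $abc\in I$. So the $1$-absorbing prime condition ``$ab\in I$ or $c\in I$'' is automatically satisfied via its first alternative.

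First I would verify the following observation: in a graded local ring $(R,\mathfrak{m})$, every homogeneous nonunit belongs to $\mathfrak{m}$. Indeed, if $a\in h(R)$ is a nonunit, then $Ra$ is a graded ideal (being generated by a homogeneous element) and it is proper, since $1\in Ra$ would make $a$ a unit. Every proper graded ideal is contained in some graded maximal ideal by a standard Zorn's-lemma argument in the category of graded ideals, and because $\mathfrak{m}$ is the unique graded maximal ideal of $R$, we conclude $a\in Ra\subseteq\mathfrak{m}$.

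Given this, the proof is immediate. Let $a,b,c\in h(R)$ be nonunits with $abc\in I$. By the observation, $a,b\in\mathfrak{m}$, hence $ab\in\mathfrak{m}^{2}\subseteq I$. Therefore $I$ is a graded $1$-absorbing prime ideal of $R$.

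There is no real obstacle here; the only point worth stating carefully is the step that a proper graded principal ideal generated by a homogeneous nonunit sits inside the unique graded maximal ideal, which is the graded analogue of the familiar fact for ordinary local rings and is part of the standard theory of graded local rings (see \cite{Nastasescue}).
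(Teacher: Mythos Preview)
Your proof is correct and follows essentially the same approach as the paper's: the paper also takes $a,b,c\in h(R)$ nonunits, notes that $a,b,c\in\mathfrak{m}$, and concludes $ab\in\mathfrak{m}^{2}\subseteq I$. The only difference is that you spell out why a homogeneous nonunit lies in $\mathfrak{m}$, whereas the paper states this step without further comment.
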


\begin{proof}
Let $abc\in I$ for some nonunits $a,b,c\in h(R)$. Then $a,b,c\in\mathfrak{m}$,
which implies that $ab\in\mathfrak{m}^{2}\subseteq I$. Therefore, $I$ is a
graded $1$-absorbing prime ideal of $R$.
\end{proof}

\begin{cor}
\label{Remark 1 (1)} Let $(R,\mathfrak{m})$ be a graded local ring. Then
$\mathfrak{m}^{2}$ is a graded $1$-absorbing prime ideal of $R$.
\end{cor}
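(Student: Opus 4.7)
The plan is to recognize this as a direct specialization of Proposition \ref{Remark 1}, with $I$ taken to be $\mathfrak{m}^{2}$ itself. Concretely, I would first verify that $\mathfrak{m}^{2}$ is a proper graded ideal of $R$: it is graded because the product of two graded ideals is graded, and it is proper because $\mathfrak{m}^{2} \subseteq \mathfrak{m} \subsetneq R$ (the containment being strict since $\mathfrak{m}$ is a maximal graded ideal and $R$ has a nonzero unity).

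Next, I would observe that the hypothesis $\mathfrak{m}^{2} \subseteq I$ of Proposition \ref{Remark 1} is trivially satisfied by the choice $I = \mathfrak{m}^{2}$, since $\mathfrak{m}^{2} \subseteq \mathfrak{m}^{2}$. Therefore, by Proposition \ref{Remark 1}, $\mathfrak{m}^{2}$ is a graded $1$-absorbing prime ideal of $R$, which is the conclusion.

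There is essentially no obstacle here; the corollary is an immediate instantiation of the preceding proposition. The only minor point worth mentioning explicitly in the write-up is the properness of $\mathfrak{m}^{2}$, which is guaranteed because $1 \notin \mathfrak{m}$ forces $1 \notin \mathfrak{m}^{2}$. Beyond that, no further argument is needed.
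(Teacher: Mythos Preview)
Your proposal is correct and follows essentially the same approach as the paper: both verify that $\mathfrak{m}^{2}$ is a proper graded ideal and then invoke Proposition~\ref{Remark 1} with $I=\mathfrak{m}^{2}$. The only cosmetic difference is that the paper cites \cite{Dawwas Yildiz}, Lemma~1, for the properness/gradedness of $\mathfrak{m}^{2}$, whereas you argue it directly.
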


\begin{proof}
By [\cite{Dawwas Yildiz}, Lemma 1], $\mathfrak{m}^{2}$ is a proper graded
ideal of $R$, and then $\mathfrak{m}^{2}$ is a graded $1$-absorbing prime
ideal of $R$ by Proposition \ref{Remark 1}.
\end{proof}

\begin{prop}
\label{Lemma 1} Let $(R,\mathfrak{m})$ be a graded local ring and $I$ be a
proper graded ideal of $R$. If $\mathfrak{m}^{3}\subseteq\phi(I)$, then $I$ is
a graded $\phi$-$1$-absorbing prime ideal of $R$.
\end{prop}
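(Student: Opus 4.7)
The plan is to follow the same vacuous-implication strategy used in Proposition \ref{Remark 1}, simply one degree higher. Suppose $a,b,c\in h(R)$ are nonunits with $abc\in I-\phi(I)$; I will derive a contradiction, so that the defining condition of a graded $\phi$-$1$-absorbing prime ideal is satisfied vacuously.

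First I would use the graded-local hypothesis. Since $(R,\mathfrak{m})$ has a unique graded maximal ideal, every homogeneous nonunit of $R$ must lie in $\mathfrak{m}$: if $x\in h(R)$ were a nonunit with $x\notin\mathfrak{m}$, then the principal ideal $Rx$ would be a graded ideal not contained in $\mathfrak{m}$, so it would be contained in some other graded maximal ideal, contradicting uniqueness. Hence $a,b,c\in\mathfrak{m}$, and therefore $abc\in\mathfrak{m}^{3}$.

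Next I would invoke the hypothesis $\mathfrak{m}^{3}\subseteq\phi(I)$ to conclude $abc\in\phi(I)$, contradicting $abc\in I-\phi(I)$. This shows no such triple $(a,b,c)$ exists, so the defining implication of a graded $\phi$-$1$-absorbing prime ideal holds trivially for $I$.

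There is no real obstacle here — the only subtle point is the observation that in a graded local ring every homogeneous nonunit lies in $\mathfrak{m}$, which is an easy consequence of uniqueness of the graded maximal ideal and is implicitly used already in the proof of Proposition \ref{Remark 1}. Everything else is a one-line containment chain $abc\in\mathfrak{m}^{3}\subseteq\phi(I)$.
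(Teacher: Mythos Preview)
Your argument is correct and is essentially identical to the paper's proof: both observe that homogeneous nonunits $a,b,c$ lie in $\mathfrak{m}$, whence $abc\in\mathfrak{m}^{3}\subseteq\phi(I)$, contradicting $abc\in I-\phi(I)$. The paper phrases the contradiction as ``suppose $I$ is not graded $\phi$-$1$-absorbing prime'' rather than your vacuous-implication framing, but the substance is the same.
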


\begin{proof}
Suppose that $I$ is not a graded $\phi$-$1$-absorbing prime ideal of $R$. Then
there exist nonunit elements $a,b,c\in h(R)$ such that $abc\in I-\phi(I)$ but
$ab\notin I$ and $c\notin I$. Since $a,b,c$ are nonunits, they are elements of
$\mathfrak{m}$, and then $abc\in\mathfrak{m}^{3}\subseteq\phi(I)$, which is a
contradiction. Hence, $I$ is a graded $\phi$-$1$-absorbing prime ideal of $R$.
\end{proof}

\begin{cor}
Let $(R,\mathfrak{m})$ be a graded local ring and $\phi(I)\neq\emptyset$ for
every ideal $I$ of $R$. If $\mathfrak{m}^{3}=\{0\}$, then every proper graded
ideal of $R$ is graded $\phi$-$1$-absorbing prime.
\end{cor}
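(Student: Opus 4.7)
The plan is to deduce this corollary as a direct application of Proposition \ref{Lemma 1}, which already handles the key content (the local hypothesis together with $\mathfrak{m}^{3}\subseteq\phi(I)$ implies the graded $\phi$-$1$-absorbing prime property). So the entire task reduces to verifying that the inclusion $\mathfrak{m}^{3}\subseteq\phi(I)$ is automatic under the present hypotheses, for \emph{every} proper graded ideal $I$ of $R$.

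First I would fix an arbitrary proper graded ideal $I$ of $R$. Under the standing assumption of the article that $\phi(I)\subseteq I$, together with the hypothesis $\phi(I)\neq\emptyset$ in this corollary, we know $\phi(I)$ is a genuine graded ideal of $R$ (not the empty symbol). In particular, $\phi(I)$ contains $0$, so $\{0\}\subseteq\phi(I)$. Combining this with the assumption $\mathfrak{m}^{3}=\{0\}$ gives the chain of inclusions $\mathfrak{m}^{3}=\{0\}\subseteq\phi(I)$.

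At that point Proposition \ref{Lemma 1} applies verbatim: since $(R,\mathfrak{m})$ is graded local, $I$ is a proper graded ideal, and $\mathfrak{m}^{3}\subseteq\phi(I)$, we conclude that $I$ is a graded $\phi$-$1$-absorbing prime ideal of $R$. Since $I$ was an arbitrary proper graded ideal, this gives the statement.

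There is no real obstacle here; the only subtlety worth flagging is the role of the hypothesis $\phi(I)\neq\emptyset$. Without it, $\phi(I)$ could equal the formal symbol $\emptyset$, in which case the inclusion $\{0\}\subseteq\phi(I)$ would not make sense set-theoretically and the appeal to Proposition \ref{Lemma 1} would break. So the step I would make explicit in writing is precisely the one that uses $\phi(I)\neq\emptyset$ to promote $\phi(I)$ to an actual graded ideal containing $0$; everything else is a one-line citation of the preceding proposition.
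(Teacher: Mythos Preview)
Your proposal is correct and follows exactly the paper's approach: the paper's proof is simply ``Apply Proposition \ref{Lemma 1},'' and you have spelled out precisely why that proposition applies, namely that $\phi(I)\neq\emptyset$ forces $\phi(I)$ to be an actual graded ideal containing $0$, so $\mathfrak{m}^{3}=\{0\}\subseteq\phi(I)$.
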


\begin{proof}
Apply Proposition \ref{Lemma 1}.
\end{proof}

\begin{thm}
\label{Theorem 1} Let $R$ be a graded ring and $I$ be a proper graded ideal of
$R$. Consider the following conditions.

\begin{enumerate}
\item $I$ is a graded $\phi$-$1$-absorbing prime ideal of $R$.

\item For each nonunits $a,b\in h(R)$ with $ab\notin I$, $(I:ab)=I\cup
(\phi(I):ab)$.

\item For each nonunits $a, b\in h(R)$ with $ab\notin I$, either $(I : ab) =
I$ or $(I : ab) =(\phi(I) : ab)$.

\item For each nonunits $a,b\in h(R)$ and proper graded ideal $L$ of $R$ such
that $abL\subseteq I$ and $abL\nsubseteq\phi(I)$, either $ab\in I$ or
$L\subseteq I$.

\item For each nonunit $a\in h(R)\ $and proper graded ideals $K,L\ $of $R$
such that $aKL\subseteq I\ $and $aKL\nsubseteq\phi(I),\ $either \ $aK\subseteq
I\ $or $L\subseteq I.$

\item For each proper graded ideals $J,K,L\ $of $R\ $such that $JKL\subseteq
I$ and $JKL\nsubseteq\phi(I),\ $either $JK\subseteq I\ $or $L\subseteq I.\ $

Then, $(6)\Rightarrow(5)\Rightarrow(4)\Rightarrow(3)\Rightarrow(2)\Rightarrow
(1).$
\end{enumerate}
\end{thm}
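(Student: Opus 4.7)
The plan is to work down the chain one step at a time, using two recurring substitution tricks: promoting a homogeneous nonunit $a$ to the principal ideal $Ra$ (to convert between element-level and ideal-level statements), and in the key middle step specializing the proper graded ideal $L$ in (4) to the colon ideal $(I:ab)$.

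For $(6)\Rightarrow(5)\Rightarrow(4)$, the idea is substitution by principal ideals. A nonunit homogeneous $a\in h(R)$ generates a graded ideal $Ra$ (graded because $a\in h(R)$) which is proper (because $a$ is a nonunit forces $1\notin Ra$). Taking $J=Ra$ in (6), commutativity of $R$ together with $RL'=L'$ for any ideal $L'$ yields $JKL=aKL$ and $JK=aK$, so the hypothesis and conclusion of (6) match those of (5) verbatim. To pass from (5) to (4), repeat the same trick with $K=Rb$: one gets $aKL=abL$ and $aK=(ab)R=(ab)$, so the conclusion $aK\subseteq I$ collapses to $ab\in I$, while the hypotheses $aKL\subseteq I$ and $aKL\not\subseteq\phi(I)$ become exactly $abL\subseteq I$ and $abL\not\subseteq\phi(I)$.

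For $(4)\Rightarrow(3)$, fix nonunits $a,b\in h(R)$ with $ab\notin I$ and set $L=(I:ab)$. Since $I$ is graded and $ab$ is homogeneous, $L$ is a graded ideal, and since $ab\notin I$ we have $1\notin L$, so $L$ is proper; by construction $abL\subseteq I$. Split on whether $abL\subseteq\phi(I)$: if yes, then $L\subseteq(\phi(I):ab)$, and combined with the reverse inclusion (which always holds, as $\phi(I)\subseteq I$) this gives $L=(\phi(I):ab)$. If no, (4) forces $ab\in I$ (ruled out) or $L\subseteq I$, which together with the automatic inclusion $I\subseteq L$ gives $L=I$. The step $(3)\Rightarrow(2)$ is then immediate, since $I\cup(\phi(I):ab)\subseteq(I:ab)$ always holds and (3) asserts the larger side coincides with one of the two pieces. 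Finally $(2)\Rightarrow(1)$ is a short chase: if $abc\in I-\phi(I)$ with $a,b,c\in h(R)$ nonunit and $ab\notin I$, then $c\in(I:ab)=I\cup(\phi(I):ab)$, and the second alternative would force $abc\in\phi(I)$, contradicting $abc\notin\phi(I)$, so $c\in I$.

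The only delicate bookkeeping concerns checking that $Ra$, $Rb$, and $(I:ab)$ are genuinely proper graded ideals whenever they are substituted; beyond that, the proof uses nothing more than commutativity of $R$, the convention $\phi(I)\subseteq I$, and elementary identities for products and colons of ideals. No obstacle requires a new idea; the whole theorem is a translation between the elementwise formulation in (1)--(3) and the ideal-theoretic formulation in (4)--(6).
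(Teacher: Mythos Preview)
Your proposal is correct and follows essentially the same approach as the paper: both arguments use the principal-ideal substitutions $J=Ra$ and $K=Rb$ for $(6)\Rightarrow(5)\Rightarrow(4)$, specialize $L=(I:ab)$ and split on whether $abL\subseteq\phi(I)$ for $(4)\Rightarrow(3)$, and finish $(3)\Rightarrow(2)\Rightarrow(1)$ by the same trivial inclusions and case check. The bookkeeping remarks you flag (that $Ra$, $Rb$, and $(I:ab)$ are proper graded ideals) are exactly the points the paper takes for granted.
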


\begin{proof}
$(6)\Rightarrow(5):\ $Suppose that $aKL\subseteq I\ $and $aKL\nsubseteq
\phi(I)$ for some nonunit $a\in h(R)\ $and proper graded ideals $K,L\ $of
$R.\ $Then $J=Ra$ is a graded ideal since $a\in h(R)$, and also $JKL\subseteq
I$ and $JKL\nsubseteq\phi(I).\ $Then by $(6),\ $we have $aK\subseteq
JK\subseteq I$ or $L\subseteq I$ which completes the proof.

$(5)\Rightarrow(4):\ $Let $abL\subseteq I$ and $abL\nsubseteq\phi(I)$ for some
nonunits $a,b\in h(R)$ and proper graded ideal $L\ $of $R.\ $Now, put
$K=Rb.\ $Then $K\ $is a graded ideal such that $aKL\subseteq I$ and
$aKL\nsubseteq\phi(I).\ $Then by $(5),\ $we have that $ab\in aK\subseteq I$ or
$L\subseteq I\ $which is needed.

$(4)\Rightarrow(3):\ $Let $a,b\in h(R)\ $nonunits such that $ab\notin
I.\ $Then $(I:ab)\ $is a proper graded ideal of $R.$\ We have two cases.
\textbf{Case 1:} let $ab(I:ab)\subseteq\phi(I).\ $Then $(I:ab)\subseteq\left(
\phi(I):ab\right)  .\ $As the reverse inclusion always holds, we have the
equality $(I:ab)=\left(  \phi(I):ab\right)  .$ \textbf{Case 2: }let
$ab(I:ab)\nsubseteq\phi(I).\ $Since $ab(I:ab)\subseteq I,\ $by $(4),\ $we get
$(I:ab)\subseteq I.\ $As $I\subseteq(I:ab)\ $always holds, we have
$I=(I:ab).$\ Therefore, $(I:ab)=I$ or $(I:ab)=(\phi(I):ab)$.

$(3)\Rightarrow(2):\ $It is clear.

$(2)\Rightarrow(1):\ $Let $abc\in I-\phi(I)\ $for some nonunits $a,b,c\in
h(R).\ $Assume that $ab\notin I.\ $Then we have $c\in(I:ab)-\left(
\phi(I):ab\right)  .\ $By $(2),\ $we conclude that $c\in I\ $which completes
the proof.
\end{proof}

In the previous Theorem, the implication $(1)\Rightarrow(6)$ is not true in
general. See the following example.

\begin{exa}
Consider the ring $R=%
\mathbb{Z}
_{50}[X].\ $Then $R=%
{\textstyle\bigoplus\limits_{n\in\mathbb{Z} }}
R_{n}\ $is a $%
\mathbb{Z}
$-graded ring, where $R_{n}=\{\overline{0}\}\ $if $n<0,\ R_{0}=%
\mathbb{Z}
_{50}$ and also $R_{n}=%
\mathbb{Z}
_{50}X^{n}\ $if $n>0.\ $Then the set of all nonunit homogeneous elements is
$nu(h(R))=\{\overline{2k},\overline{5k},\overline{a}X^{n}:k,a\in%
\mathbb{Z}
\ $and $n\geq1\}.\ $Now, consider the graded ideal $I=(X,\overline{25})$ of
$R.\ $Set $\phi(I)=\{\overline{0}\}.\ $Now, we will show that $I\ $is a graded
$\phi$-1-absorbing prime ideal of $R.$\ To see this, choose nonunit
homogeneous elements $r,s,t\in nu(h(R))$ such that $rst\in I-\phi(I).\ $We
have two cases. \textbf{Case 1:} If at least one of the $r,s,t$ is of the form
$\overline{a}X^{n},\ $then we have $rs\in I\ $or $t\in I\ $since $X\in
I.\ $\textbf{Case 2:} Assume that $r,s,t\in\{\overline{2k},\overline{5k}:k\in%
\mathbb{Z}
\}.\ $Then we can write $r=\overline{m},s=\overline{n},t=\overline{k}\ $for
some $m,n,k\in%
\mathbb{Z}
.\ $Since $rst\in I-\phi(I),\ $we have $25|mnk\ $and $2\nmid mnk.\ $Thus,
$2\ $does not divide $m,n$ and $k.\ $Which implies that $25|mn$ and so $rs\in
I.\ $Therefore, $I\ $is a graded $\phi$-1-absorbing prime ideal of $R.\ $Now,
we will show that $I\ $does not satisfy $(2)\ $in Theorem \ref{Theorem 1}.
Now, take $a=\overline{2}\ $and $b=\overline{5}.\ $Then note that
$ab=\overline{10}\notin I.\ $Also, it is easy to see that $\overline{5}%
,X\in(I:ab).\ $Then we have $\overline{5}+X\in(I:ab).\ $On the other hand,
note that $\overline{5}+X\notin(\phi(I):ab)\cup I.\ $This shows that
$(\phi(I):ab)\cup I\subsetneq(I:ab).\ $Thus, $I\ $does not satisfy (2), and so
it does not satisfy all axioms $(2)-(6)$ in Theorem \ref{Theorem 1}.
\end{exa}

\begin{defn}
Let $R$ be a $G$-graded ring and $\phi:GI(R)\rightarrow GI(R)\cup
\{\emptyset\}$ be a function. Suppose that $g\in G$ and $I$ is graded ideal of
$R$ with $I_{g}\neq R_{g}$. Then $I$ is called a $g$-$\phi$-$1$-absorbing
prime ideal of $R$ if whenever $a,b,c\in R_{g}$ are nonunits such that $abc\in
I-\phi(I)$, then $ab\in I$ or $c\in I$.
\end{defn}

\begin{thm}
\label{Theorem 1 (1)} Let $R$ be a $G$-graded ring, $g\in G$ and $I$ be a
graded ideal of $R$ with $I_{g}\neq R_{g}$. Then the following statements are equivalent.

\begin{enumerate}
\item $I$ is a $g$-$\phi$-$1$-absorbing prime ideal of $R$.

\item For each nonunits $a,b\in R_{g}$ with $ab\notin I$, $(I:_{R_{g}%
}ab)\subseteq I\cup(\phi(I):_{R_{g}}ab)$.

\item For each nonunits $a,b\in R_{g}$ with $ab\notin I$, either $(I:_{R_{g}%
}ab)\subseteq I$ or $(I:_{R_{g}}ab)=(\phi(I):_{R_{g}}ab)$.

\item For each nonunits $a,b\in R_{g}$ and graded ideal $J$ of $R$ such that
$J_{g}\neq R_{g}$, $abJ_{g}\subseteq I$ but $abJ_{g}\nsubseteq\phi(I)$, either
$ab\in I$ or $J_{g}\subseteq I$.

\item For each nonunit $a\in h(R)$ and graded ideals $J,K$ of $R$ such that
$J_{g}\neq R_{g}$, $K_{g}\neq R_{g}$, $aJ_{g}K_{g}\subseteq I$ but
$aJ_{g}K_{g}\nsubseteq\phi(I)$, either $aJ_{g}\subseteq I$ or $K_{g}\subseteq
I$.

\item For each graded ideals $J,K,L$ of $R$ such that $J_{g}\neq R_{g}$,
$K_{g}\neq R_{g}$, $L_{g}\neq R_{g}$, $J_{g}K_{g}L_{g}\subseteq I$ but
$J_{g}K_{g}L_{g}\nsubseteq\phi(I)$, either $J_{g}K_{g}\subseteq I$ or
$L_{g}\subseteq I$.
\end{enumerate}
\end{thm}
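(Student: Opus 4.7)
My plan is to prove the six conditions equivalent by establishing the cycle $(1)\Rightarrow(2)\Rightarrow(3)\Rightarrow(4)\Rightarrow(5)\Rightarrow(6)\Rightarrow(1)$, following the template of Theorem \ref{Theorem 1} but crucially exploiting the fact that every element of the residual $(I:_{R_g}ab)$ is automatically homogeneous. This is precisely the gap that prevents the analogous cycle from closing in Theorem \ref{Theorem 1}.

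For $(1)\Rightarrow(2)$, I would take nonunits $a,b\in R_g$ with $ab\notin I$ and any $c\in(I:_{R_g}ab)$. Because $c\in R_g$, applying (1) is legitimate: if $c$ were a unit then $ab\in I$ would follow, contradiction, so $c$ is a nonunit; then if $abc\notin\phi(I)$ we get $c\in I$ from (1), and if $abc\in\phi(I)$ then $c\in(\phi(I):_{R_g}ab)$. For $(2)\Rightarrow(3)$, I would note that $(I:_{R_g}ab)$, $I\cap R_g$, and $(\phi(I):_{R_g}ab)$ are all $R_e$-submodules of $R_g$, and invoke the submodule avoidance lemma: an $R_e$-submodule which is the union of two $R_e$-submodules coincides with one of them. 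The step $(3)\Rightarrow(4)$ is then immediate, since $J_g\subseteq(I:_{R_g}ab)$ and the alternative $J_g\subseteq(\phi(I):_{R_g}ab)$ is excluded by $abJ_g\not\subseteq\phi(I)$.

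The ideal-lifting implications $(4)\Rightarrow(5)\Rightarrow(6)$ proceed by inserting at the appropriate slot a principal graded ideal $Rx$ generated by a homogeneous nonunit $x$, verifying the hypotheses of the earlier condition on its $g$-component (which contains $R_ex\ni x$), and reading off the conclusion. Finally, for $(6)\Rightarrow(1)$, I would take $a,b,c\in R_g$ nonunits with $abc\in I-\phi(I)$ and apply (6) to the graded ideals $Ra$, $Rb$, $Rc$, absorbing the element-level data into ideal-level hypotheses. In each case the underlying product $R_eaR_ebR_ec$ lies in $I$ because $abc\in I$ and $I$ is an ideal, and it avoids $\phi(I)$ because $abc$ does.

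The main obstacle I anticipate is that for a homogeneous nonunit $x\in R_h$, the component $(Rx)_g=R_{gh^{-1}}x$ can coincide with $R_g$ (for instance when $x$ generates $R_g$ as an $R_e$-module), and then the principal-ideal trick violates the hypothesis $J_g\neq R_g$ required by conditions $(4)$--$(6)$. To handle this, I would fall back on the element-swap technique already implicit in $(1)\Rightarrow(2)$: for any $j\in J_g$ where the ``bad'' alternative $aj\notin I$ holds, adding a fixed element $j_0$ with controlled behaviour allows one to promote a partial containment for a particular element to a containment on all of $J_g$, via the identity $aj=a(j+j_0)-aj_0$ and the $R_e$-module structure. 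This swap step---which crucially uses that all elements of $J_g$, $K_g$, $L_g$ are nonunits (because $J_g\neq R_g$ forces the ideal $J$ itself to be proper)---is exactly what makes the cycle close in the $g$-graded setting, in contrast to the full graded setting of Theorem \ref{Theorem 1}.
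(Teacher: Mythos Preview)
Your overall cycle $(1)\Rightarrow(2)\Rightarrow\cdots\Rightarrow(6)\Rightarrow(1)$ and the handling of $(1)\Rightarrow(2)$, $(2)\Rightarrow(3)$, $(3)\Rightarrow(4)$, and $(6)\Rightarrow(1)$ match the paper's proof essentially verbatim (the paper cites McCoy's lemma \cite{Mccoy} where you invoke $R_e$-submodule avoidance, which is the same underlying argument). The element-swap device you describe in your final paragraph is also exactly what the paper uses for $(4)\Rightarrow(5)$ and $(5)\Rightarrow(6)$.

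However, your third paragraph mis-frames the forward implications $(4)\Rightarrow(5)\Rightarrow(6)$. Inserting a principal graded ideal $Rx$ is the device for the \emph{reverse} direction (this is what Theorem~\ref{Theorem 1} does, proving only $(6)\Rightarrow\cdots\Rightarrow(1)$); in the forward direction you are not specialising an ideal hypothesis to a principal one, you are trying to \emph{lift} an element-level conclusion to an ideal-level one. Moreover, the obstacle you single out is not the operative one: for any $x\in J_g$ with $J_g\neq R_g$ one has $Rx\subseteq J$ proper, hence $(Rx)_g\subseteq J_g\subsetneq R_g$ automatically. The real difficulty is that applying (4) to the pair $a,x$ (with $x\in J_g$) and the ideal $K$ only yields information when $axK_g\nsubseteq\phi(I)$; for those $x$ with $axK_g\subseteq\phi(I)$ you learn nothing, and it is precisely this non-uniformity across $x\in J_g$ that forces the swap $x\mapsto x+y$ with $y\in J_g$ chosen so that $ayK_g\nsubseteq\phi(I)$. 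The paper carries out this swap argument directly, without any principal-ideal detour. Your proof becomes correct and coincides with the paper's once you drop the third paragraph and promote the swap argument of your final paragraph to the main line for $(4)\Rightarrow(5)$ and $(5)\Rightarrow(6)$.
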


\begin{proof}
$(1)\Rightarrow(2):$ Let $a,b\in R_{g}$ be nonunits with $ab\notin I.\ $Take
$x\in(I:_{R_{g}}ab).\ $Then we have $x\in R_{g}\ $and $abx\in I.\ $Since
$ab\notin I,\ x$ is nonunit. As $I$ is a $g$-$\phi$-$1$-absorbing prime ideal
of $R$, we conclude that $x\in I\ $or $abx\in\phi(I).\ $Which implies that
$x\in I\cup(\phi(I):_{R_{g}}ab).\ $Thus, $(I:_{R_{g}}ab)\subseteq I\cup
(\phi(I):_{R_{g}}ab).$

$(2)\Rightarrow(3):\ $Assume that $(I:_{R_{g}}ab)\subseteq I\cup
(\phi(I):_{R_{g}}ab).\ $Then by \cite{Mccoy}, $(I:_{R_{g}}ab)\subseteq I$ or
$(I:_{R_{g}}ab)\subseteq(\phi(I):_{R_{g}}ab).\ $In the first case, there is
nothing to prove. Assume that $(I:_{R_{g}}ab)\subseteq(\phi(I):_{R_{g}}%
ab).\ $Since the reverse inclusion always holds, we have the equality
$(I:_{R_{g}}ab)=(\phi(I):_{R_{g}}ab).$

$(3)\Rightarrow(4):\ $Suppose that $abJ_{g}\subseteq I$ but $abJ_{g}%
\nsubseteq\phi(I)$ for some nonunits $a,b\in R_{g}$ and graded ideal $J$ of
$R$ with $J_{g}\neq R_{g}.\ $If $ab\in I$,\ then there is nothing to prove. So
assume that $ab\notin I.\ $Since $J_{g}\subseteq(I:_{R_{g}}ab)$ and
$J_{g}\nsubseteq(\phi(I):_{R_{g}}ab),\ $by $(3),\ J_{g}\subseteq(I:_{R_{g}%
}ab)\subseteq I\ $which completes the proof.

$(4)\Rightarrow(5):$ Suppose that $aJ_{g}K_{g}\subseteq I$ and $aJ_{g}%
K_{g}\subseteq\phi(I)$. Assume that $aJ_{g}\nsubseteq I$ and $K_{g}\nsubseteq
I$. Then there exists $x\in J_{g}$ such that $ax\notin I$. Also, since
$aJ_{g}K_{g}\nsubseteq\phi(I)$, there exists $y\in J_{g}$ such that
$ayK_{g}\nsubseteq\phi(I)$. Now, assume that $axK_{g}\nsubseteq\phi(I)$. Since
$a,x$ are nonunits and $axK_{g}\subseteq I$ , we have either $ax\in I$ or
$K_{g}\subseteq I$, a contradiction. So, we get $axK_{g}\subseteq\phi(I)$.
Also, we have $a(x+y)K_{g}\subseteq I$ and $a(x+y)K_{g}\nsubseteq\phi(I)$,
which implies that $a(x+y)\in I$. Since $ayK_{g}\subseteq I$ , $ayK_{g}%
\nsubseteq\phi(I)$ and $K_{g}\nsubseteq I$ , we get $ay\in I$. Thus, we obtain
$ax\in I$ giving a contradiction.

$(5)\Rightarrow(6):$ Suppose that $J_{g}K_{g}L_{g}\subseteq I$ but $J_{g}%
K_{g}L_{g}\nsubseteq\phi(I)$ for some graded ideals $J, K$ and $L$ of $R$ with
$J_{g}\neq R_{g}$, $K_{g}\neq R_{g}$ and $L_{g}\neq R_{g}$. Assume that
$J_{g}K_{g}\nsubseteq I$ and $L_{g}\nsubseteq I$. Then there exists $b\in
J_{g}$ such that $bK_{g}\nsubseteq I$. Also, since $J_{g}K_{g}L_{g}%
\nsubseteq\phi(I)$, $aK_{g}L_{g}\nsubseteq\phi(I)$ for some $a\in J_{g}$. Then
we get $aK_{g}\subseteq I$ since $aK_{g}L_{g}\subseteq I$ and $aK_{g}%
L_{g}\nsubseteq\phi(I)$. Suppose that $bK_{g}L_{g}\nsubseteq\phi(I)$. By (5),
this gives $bK_{g}\subseteq I$ or $L_{g}\subseteq I$ , which is a
contradiction. So, $bK_{g}L_{g}\subseteq\phi(I)$. As $(a+b)K_{g}L_{g}\subseteq
I$ and $(a+b)K_{g}L_{g}\nsubseteq\phi(I)$, we have $(a + b)K_{g}\subseteq I$.
This implies $bK_{g}\subseteq I$, a contradiction.

$(6)\Rightarrow(1):$ Let $abc\in I-\phi(I)$ for some nonunits $a,b,c\in R_{g}%
$. Then $(Ra)_{g}(Rb)_{g}(Rc)_{g}\subseteq I$ and $(Ra)_{g}(Rb)_{g}%
(Rc)_{g}\nsubseteq\phi(I)$. Hence, $(Ra)_{g}(Rb)_{g}\subseteq I$ or
$(Rc)_{g}\subseteq I$ showing that $ab\in I$ or $c\in I$, as desired.
\end{proof}

\begin{defn}
Let $I$ be a $g$-$\phi$-$1$-absorbing prime ideal of $R$ and $a, b, c\in
R_{g}$ be nonunits. Then $(a, b, c)$ is called a $g$-$\phi$-$1$-triple zero of
$I$ if $abc\in\phi(I)$, $ab\notin I$ and $c\notin I$.
\end{defn}

\begin{thm}
\label{Theorem 2} Suppose that $I$ is a $g$-$\phi$-$1$-absorbing prime ideal
of $R$ and $(a, b, c)$ is a $g$-$\phi$-$1$-triple zero of $I$. Then
$abI_{g}\subseteq\phi(I)$.
\end{thm}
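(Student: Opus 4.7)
The plan is to argue by contradiction. Suppose that $abI_g \nsubseteq \phi(I)$, so there exists $x \in I_g$ with $abx \notin \phi(I)$. First, I would observe that since $I_g \neq R_g$ the graded ideal $I$ cannot equal $R$, and hence contains no units; in particular $x \in I_g$ is a nonunit of $R_g$. Also, $x \in I$ gives $abx \in I$, so $abx \in I - \phi(I)$. Trying to apply the $g$-$\phi$-$1$-absorbing prime property directly to the triple $(a,b,x)$ only yields $ab \in I$ or $x \in I$, and the second conclusion is already known; this forces us to combine $x$ with $c$ to manufacture a more informative triple.

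The key step is to consider the element $x + c \in R_g$ and the product $ab(x+c) = abx + abc$. By hypothesis $abc \in \phi(I)$, while $abx \notin \phi(I)$, so the sum $ab(x+c)$ does not lie in $\phi(I)$; but $abx, abc \in I$, so $ab(x+c) \in I - \phi(I)$. If $x+c$ were a unit of $R$, then from $ab(x+c) \in I$ we would get $ab = ab(x+c)(x+c)^{-1} \in I$, contradicting that $(a,b,c)$ is a $g$-$\phi$-$1$-triple zero. Hence $x + c$ is a nonunit of $R_g$, and the $g$-$\phi$-$1$-absorbing prime property applied to the triple $(a,b,x+c)$ forces $ab \in I$ or $x + c \in I$. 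The first possibility again violates the triple-zero hypothesis, and the second, combined with $x \in I$, yields $c \in I$, contradicting the triple-zero hypothesis once more.

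The main obstacle is essentially the trick of replacing the useless witness $x$ by the shifted element $x + c$, since a naive appeal to the definition with the triple $(a,b,x)$ collapses into the trivially true conclusion $x \in I$. The bookkeeping for membership in $\phi(I)$ on the one hand and in $I$ on the other is routine once the right substitution is made; the small case analysis on whether $x+c$ is a unit is a technical check to make sure the absorbing property is actually applicable.
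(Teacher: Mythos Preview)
Your argument is correct and matches the paper's proof essentially verbatim: the paper also assumes $abI_g \nsubseteq \phi(I)$, picks $x \in I_g$ with $abx \notin \phi(I)$, observes $ab(c+x) \in I - \phi(I)$, handles the case where $c+x$ is a unit, and then applies the $g$-$\phi$-$1$-absorbing property to reach the contradiction. Your write-up is in fact a bit more explicit than the paper's in checking that $ab(x+c) \notin \phi(I)$ and in deducing $c \in I$ from $x+c \in I$, but the underlying idea is identical.
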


\begin{proof}
Now, $abc\in\phi(I)$, $ab\notin I$ and $c\notin I$. Suppose that
$abI_{g}\nsubseteq\phi(I)$. Then there exists $x\in I_{g}$ such that
$abx\notin\phi(I)$. So, $ab(c + x)\in I-\phi(I)$. If $c + x$ is unit, then
$ab\in I$, a contradiction. Now, assume that $c + x$ is nonunit and so we get
$ab\in I$ or $c\in I$, a contradiction. Thus, we have $abI_{g}\subseteq
\phi(I)$.
\end{proof}

\begin{thm}
\label{Theorem 2 (1)} Suppose that $I$ is a $g$-$\phi$-$1$-absorbing prime
ideal of $R$ and $(a, b, c)$ is a $g$-$\phi$-$1$-triple zero of $I$. If $ac,
bc\notin I$, then $acI_{g}\subseteq\phi(I)$, $bcI_{g}\subseteq\phi(I)$,
$aI_{g}^{2}\subseteq\phi(I)$, $bI_{g}^{2}\subseteq\phi(I)$ and $cI_{g}%
^{2}\subseteq\phi(I)$.
\end{thm}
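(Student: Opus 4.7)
The plan is to mimic the single-step argument of Theorem \ref{Theorem 2}, but feed into it the new information that $ac,bc\notin I$ (which rules out more degenerate cases) and, for the squared parts, to use Theorem \ref{Theorem 2} and the already proved relations $acI_g\subseteq\phi(I)$, $bcI_g\subseteq\phi(I)$ as ``absorbing'' bookkeeping so that only one unwanted term survives in an expansion.

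For $acI_g\subseteq\phi(I)$, I would argue by contradiction: pick $x\in I_g$ with $acx\notin\phi(I)$ and look at $ac(b+x)=abc+acx$. Since $abc\in\phi(I)\subseteq I$ and $acx\in I$, we have $ac(b+x)\in I$; and $abc\in\phi(I)$ together with $acx\notin\phi(I)$ forces $ac(b+x)\notin\phi(I)$. If $b+x$ is a unit then $ac\in I$, contradicting the hypothesis $ac\notin I$. If $b+x$ is a nonunit then apply the $g$-$\phi$-$1$-absorbing prime property to the triple $(a,c,b+x)$ of nonunits in $R_g$: either $ac\in I$ (again a contradiction) or $b+x\in I$, whence $b\in I$ (since $x\in I$), contradicting $b\notin I$ (which holds because $ab\notin I$). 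The argument for $bcI_g\subseteq\phi(I)$ is identical after interchanging the roles of $a$ and $b$.

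For $aI_g^2\subseteq\phi(I)$, suppose $axy\notin\phi(I)$ for some $x,y\in I_g$ and expand
\[
a(b+x)(c+y)=abc+aby+axc+axy.
\]
By Theorem \ref{Theorem 2} we have $aby\in abI_g\subseteq\phi(I)$; by the step just proved, $axc\in acI_g\subseteq\phi(I)$; and $abc\in\phi(I)$. Hence $a(b+x)(c+y)\in I-\phi(I)$. Now dispatch by cases exactly as in the previous paragraph: if $b+x$ is a unit then $a(c+y)\in I$ gives $ac\in I$, contradiction; if $c+y$ is a unit then $a(b+x)\in I$ gives $ab\in I$, contradiction; if both are nonunits, the $g$-$\phi$-$1$-absorbing prime property applied to the triple $(a,b+x,c+y)$ forces $a(b+x)\in I$ or $c+y\in I$, each of which produces $ab\in I$ or $c\in I$, contradicting that $(a,b,c)$ is a $g$-$\phi$-$1$-triple zero of $I$. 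The arguments for $bI_g^2\subseteq\phi(I)$ and $cI_g^2\subseteq\phi(I)$ are the symmetric analogues obtained by expanding $(a+x)b(c+y)$ and $(a+x)(b+y)c$ respectively; in each case all ``cross terms'' lie in $\phi(I)$ thanks to Theorem \ref{Theorem 2} together with the previously established $acI_g,bcI_g\subseteq\phi(I)$, so only the suspect term remains, and the same trichotomy on units closes the argument.

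The only mildly delicate point — and the place I expect to have to be careful — is the order of the factors when invoking the definition of a $g$-$\phi$-$1$-absorbing prime ideal, since that definition is asymmetric in the third slot. For each of the three ``squared'' claims I therefore need to group the perturbed product so that the element whose membership in $I$ I want to rule out sits in the correct (last) position, and I need the already-established relations $ab\notin I$, $ac\notin I$, $bc\notin I$, $c\notin I$ to kill every case. This ordering discipline is the only non-routine aspect; everything else is the unit/nonunit case split used in Theorem \ref{Theorem 2}.
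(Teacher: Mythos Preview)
Your proposal is correct and follows essentially the same approach as the paper's proof: perturb one or two slots of the triple by elements of $I_g$, observe that the resulting product lies in $I-\phi(I)$, and run the unit/nonunit case split to reach a contradiction. The only cosmetic differences are that for $acI_g\subseteq\phi(I)$ the paper orders the triple as $(a,b+x,c)$ (concluding $a(b+x)\in I$ or $c\in I$, hence $ab\in I$ or $c\in I$) whereas you order it as $(a,c,b+x)$ (using $ac\notin I$ and $b\notin I$ instead), and that you make explicit the use of Theorem~\ref{Theorem 2} and the already-proved $acI_g,bcI_g\subseteq\phi(I)$ to kill the cross terms in the squared cases, which the paper leaves implicit. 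Both orderings work, and your added bookkeeping is if anything a clarification rather than a deviation; your worry about ``ordering discipline'' is well placed but, as you found, every needed ordering succeeds.
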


\begin{proof}
Suppose that $acI_{g}\nsubseteq\phi(I)$. Then there exists $x\in I_{g}$ such
that $acx\notin\phi(I)$. This implies that $a(b + x)c\in I-\phi(I)$. If $b +
x$ is unit, then $ac\in I$ which is a contradiction. Thus $b + x$ is nonunit.
Since $I$ is a $g$-$\phi$-$1$-absorbing prime ideal, we conclude either $a(b +
x)\in I$ or $c\in I$ , which implies that $ab\in I$ or $c\in I$, a
contradiction. Thus, $acI_{g}\subseteq\phi(I)$. By using similar argument, we
have $bcI_{g}\subseteq\phi(I)$. Now, we will show that $aI_{g}^{2}%
\subseteq\phi(I)$. Suppose not. Then there exist $x, y\in I_{g}$ such that
$axy\notin\phi(I)$. It implies that $a(b + x)(c + y)\in I-\phi(I)$. If $(b+x)$
is unit, then $a(c+y)\in I$ which gives $ac\in I$, a contradiction. Similarly,
$(c+y)$ is nonunit. Then either $a(b + x)\in I$ or $c + y\in I$ implying that
$ab\in I$ or $c\in I$. Thus, we have $aI_{g}^{2}\subseteq\phi(I)$. Similarly,
we get $bI_{g}^{2}\subseteq\phi(I)$ and $cI_{g}^{2}\subseteq\phi(I)$.
\end{proof}

\begin{thm}
\label{Theorem 2 (2)} Suppose that $I$ is a $g$-$\phi$-$1$-absorbing prime
ideal of $R$ and $(a, b, c)$ is a $g$-$\phi$-$1$-triple zero of $I$. If $ac,
bc\notin I$, then $I_{g}^{3}\subseteq\phi(I)$.
\end{thm}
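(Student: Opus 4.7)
The plan is to take arbitrary $x,y,z\in I_g$ and show that $xyz\in\phi(I)$, which will give $I_g^3\subseteq\phi(I)$. The main gadget is to expand the product $(a+x)(b+y)(c+z)$ into its eight monomials and observe that every monomial other than $xyz$ already sits inside $\phi(I)$. Specifically, $abc\in\phi(I)$ because $(a,b,c)$ is a triple zero; the three monomials carrying exactly one $I_g$-factor, namely $abz$, $ayc$, $xbc$, belong to $abI_g$, $acI_g$, $bcI_g$ respectively, each contained in $\phi(I)$ by Theorems \ref{Theorem 2} and \ref{Theorem 2 (1)}; the three monomials carrying two $I_g$-factors, namely $ayz$, $xbz$, $xyc$, belong to $aI_g^2$, $bI_g^2$, $cI_g^2$ respectively, each contained in $\phi(I)$ by Theorem \ref{Theorem 2 (1)}. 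Consequently $(a+x)(b+y)(c+z)\equiv xyz\pmod{\phi(I)}$, while the same expansion shows that $(a+x)(b+y)(c+z)\in I$ automatically.

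From here I would argue by contradiction: assume $xyz\notin\phi(I)$, so that $(a+x)(b+y)(c+z)\in I\setminus\phi(I)$. To invoke the $g$-$\phi$-$1$-absorbing prime hypothesis I need each of $a+x$, $b+y$, $c+z$ to be a nonunit element of $R_g$. This is the delicate step, and it is exactly where the assumption $ac,bc\notin I$ earns its keep: if $a+x$ were a unit, then $(b+y)(c+z)\in I$ would expand to $bc+bz+yc+yz\in I$, and since the last three summands already lie in $I$ (each carries a factor from $I_g$), we would force $bc\in I$, contradicting the hypothesis. Symmetrically, $b+y$ a unit would force $ac\in I$, and $c+z$ a unit would force $ab\in I$, both ruled out by the hypotheses and the triple-zero data.

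Once all three factors are verified to be nonunits in $R_g$, the definition of a $g$-$\phi$-$1$-absorbing prime ideal yields $(a+x)(b+y)\in I$ or $c+z\in I$. Expanding $(a+x)(b+y)=ab+ay+xb+xy$ and absorbing the last three summands into $I$ forces $ab\in I$, contradicting the triple-zero assumption; while $c+z\in I$ together with $z\in I_g\subseteq I$ gives $c\in I$, again contradicting the triple zero. Either alternative yields a contradiction, so $xyz\in\phi(I)$, establishing $I_g^3\subseteq\phi(I)$. The only non-routine step is the unit-versus-nonunit analysis for $a+x$, $b+y$, $c+z$, and it is precisely tailored to consume all three facts $ab\notin I$, $ac\notin I$, $bc\notin I$.
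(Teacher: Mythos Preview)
Your proof is correct and follows essentially the same approach as the paper's own proof. The only difference is that you spell out explicitly, via Theorems~\ref{Theorem 2} and~\ref{Theorem 2 (1)}, why the seven monomials other than $xyz$ lie in $\phi(I)$, whereas the paper compresses this into the single phrase ``and then $(a+x)(b+y)(c+z)\in I-\phi(I)$''; the nonunit analysis and the concluding contradiction are identical in both versions.
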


\begin{proof}
Suppose that $I_{g}^{3}\nsubseteq\phi(I)$. Then there exist $x, y, z\in I_{g}$
such that $xyz\notin\phi(I)$, and then $(a + x)(b + y)(c + z)\in I-\phi(I)$.
If $a + x$ is unit, then we obtain that $(b + y)(c + z) = bc + bz + cy + yz\in
I$ and so $bc\in I$, which is a contradiction. Similarly, we can show that $b
+ y$ and $c + z$ are nonunits. Then we get $(a + x)(b + y)\in I$ or $c + z\in
I$. This gives $ab\in I$ or $c\in I$, a contradiction. Hence, $I_{g}%
^{3}\subseteq\phi(I)$.
\end{proof}

\begin{thm}
\label{Theorem 3} Let $R$ be a $G$-graded ring, $g\in G$ and $x\in R_{g}$ be
nonunit. Suppose that $(0 : x)\subseteq Rx$. Then $Rx$ is a $g$-$\phi$%
-$1$-absorbing prime ideal of $R$ with $\phi\leq\phi_{2}$ if and only if $Rx$
is a $g$-$1$-absorbing prime ideal of $R$.
\end{thm}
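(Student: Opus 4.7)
My plan splits the biconditional into two implications. The ($\Leftarrow$) direction is immediate from the obvious $g$-analogue of Proposition \ref{Proposition 1}(1): since $\phi_{\emptyset}\leq\phi$ holds for any function $\phi$, a $g$-$1$-absorbing prime (that is, a $g$-$\phi_{\emptyset}$-$1$-absorbing prime) ideal is automatically a $g$-$\phi$-$1$-absorbing prime ideal, and in this direction the assumption $\phi\leq\phi_{2}$ plays no role.

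For ($\Rightarrow$), I would take nonunits $a,b,c\in R_{g}$ with $abc\in Rx$ and aim to derive $ab\in Rx$ or $c\in Rx$. If $abc\notin\phi(Rx)$, the defining property of a $g$-$\phi$-$1$-absorbing prime ideal closes the case. So assume $abc\in\phi(Rx)$; because $\phi\leq\phi_{2}$, we have $\phi(Rx)\subseteq(Rx)^{2}=Rx^{2}$, which places $abc$ inside $Rx^{2}$. I would then argue by contradiction, assuming additionally that $ab\notin Rx$ and $c\notin Rx$, and reach a contradiction via the shift trick $c\mapsto c+x$.

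The argument splits on whether $abx\in Rx^{2}$. In the affirmative case, writing $abx=sx^{2}$ gives $(ab-sx)x=0$, so the hypothesis $(0:x)\subseteq Rx$ forces $ab-sx\in Rx$, hence $ab\in Rx$, a contradiction. In the negative case we have $abx\notin Rx^{2}\supseteq\phi(Rx)$, so $abx\notin\phi(Rx)$. Now $ab(c+x)=abc+abx$ lies in $Rx$, and since $abc\in\phi(Rx)$ while $abx\notin\phi(Rx)$, the sum lies outside $\phi(Rx)$. Thus $ab(c+x)\in Rx-\phi(Rx)$.

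To apply the $g$-$\phi$-$1$-absorbing property to the triple $(a,b,c+x)\subseteq R_{g}$, I must first verify that $c+x$ is a nonunit; but if it were a unit, then $ab(c+x)\in Rx$ together with the fact that $Rx$ is an ideal would already yield $ab\in Rx$, contradicting the assumption. Hence $c+x$ is nonunit, and the defining property of the ideal gives $ab\in Rx$ or $c+x\in Rx$; the first is excluded by assumption, and the second yields $c=(c+x)-x\in Rx$, again a contradiction. The main technical point I expect to watch is the dovetailing of the two hypotheses: $(0:x)\subseteq Rx$ is precisely what is needed to collapse the ``$abx\in Rx^{2}$'' branch, while $\phi\leq\phi_{2}$ supplies the inclusion $\phi(Rx)\subseteq Rx^{2}$ that makes the split on $abx$ meaningful in the first place.
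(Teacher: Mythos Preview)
Your proof is correct and follows essentially the same route as the paper's: the paper first passes to the $g$-$\phi_{2}$-$1$-absorbing property and then splits on whether $ab(c+x)\in (Rx)^{2}$, while you work directly with $\phi$ and split on whether $abx\in Rx^{2}$, but since $abc\in (Rx)^{2}$ these two case splits are equivalent and the remaining steps (the $(0:x)\subseteq Rx$ collapse and the $c\mapsto c+x$ shift) are identical.
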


\begin{proof}
Suppose that $Rx$ is a $g$-$\phi$-$1$-absorbing prime ideal of $R$ with
$\phi\leq\phi_{2}$. Then it is also a $g$-$\phi_{2}$-$1$-absorbing prime ideal
of $R$ by the sense of Proposition \ref{Proposition 1}. Let $abc\in Rx$ for
some nonunits $a, b, c\in R_{g}$. If $abc\notin\left(  Rx\right)  ^{2}$, then
$ab\in Rx$ or $c\in Rx$. Suppose that $abc\in\left(  Rx\right)  ^{2}$. We have
$ab(c +x)\in Rx$. If $c +x$ is unit, we are done. Hence, we can assume that $c
+x$ is nonunit. Assume that $ab(c + x)\notin\left(  Rx\right)  ^{2}$. Then we
get either $ab\in Rx$ or $c + x\in Rx$ implying $ab\in Rx$ or $c\in Rx$. Now,
assume that $ab(c + x)\in\left(  Rx\right)  ^{2}$. This gives $xab\in\left(
Rx\right)  ^{2}$ and so there exists $t\in R$ such that $xab = x^{2}t$. Thus
we have $ab-xt\in(0 : x)\subseteq Rx$. Therefore, $ab\in Rx$, as needed. The
converse is clear.
\end{proof}

\begin{rem}
Note that the condition $(0:x)\subseteq Rx$ in Theorem \ref{Theorem 3}
trivially holds for every regular element $x$.
\end{rem}

\begin{thm}
\label{Theorem 4} Let $R$ be a graded ring and $I$ be a graded ideal of $R$
with $I_{e}\neq R_{e}$. Suppose that $R_{e}$ is not local ring and
$(\phi(I):_{R_{e}}a)$ is not maximal ideal of $R_{e}$ for each $a\in I_{e}$.
Then $I$ is an $e$-$\phi$-prime ideal of $R$ if and only if $I$ is an
$e$-$\phi$-$1$-absorbing prime ideal of $R$.
\end{thm}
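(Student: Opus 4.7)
The forward direction should be immediate: if $I$ is $e$-$\phi$-prime and $abc \in I - \phi(I)$ with $a,b,c \in R_e$ nonunits, then applying the $\phi$-prime property to the factorization $(ab)\cdot c \in I - \phi(I)$ gives $ab \in I$ or $c \in I$, so $I$ is $e$-$\phi$-$1$-absorbing prime. This is essentially Proposition \ref{Proposition 1}(4) specialized to the $e$-component, and requires neither hypothesis on $R_e$ nor on $\phi$.

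For the converse, assume $I$ is $e$-$\phi$-$1$-absorbing prime and pick $a,b \in R_e$ with $ab \in I - \phi(I)$. If $a$ or $b$ is a unit, the other immediately lies in $I$, so I would reduce to the case that both are nonunits and argue by contradiction, assuming $a \notin I$ and $b \notin I$. Set $Q = (\phi(I):_{R_e} ab)$; then $Q$ is a proper ideal of $R_e$ (since $ab \notin \phi(I)$) and, by the hypothesis applied to $ab \in I_e$, is not maximal. The key idea is to manufacture a third nonunit factor: for every nonunit $r \in R_e \setminus Q$, one has $abr \in I - \phi(I)$ with $a,r,b$ all nonunit, so the $e$-$\phi$-$1$-absorbing prime hypothesis applied to the triple $(a,r,b)$ yields $ar \in I$ or $b \in I$, and since $b \notin I$ it forces $ar \in I$. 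The desired contradiction $a \in I$ would then follow if I could write $1$ as an $R_e$-combination $\sum s_i r_i$ with each $r_i$ a nonunit outside $Q$, because then $a = \sum s_i(ar_i) \in I$.

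The main obstacle, and the only place the two hypotheses of the theorem enter together, is showing that the ideal $\langle S \rangle$ generated by $S := \{r \in R_e : r \text{ nonunit},\ r \notin Q\}$ equals $R_e$. I would argue this by contradiction: if $\langle S \rangle \subseteq M^*$ for some maximal ideal $M^*$ of $R_e$, then every nonunit of $R_e$ lies in $M^* \cup Q$. The non-locality of $R_e$ supplies a maximal ideal $M' \ne M^*$, and prime avoidance (using that $M^*$ is prime) applied to $M' \subseteq M^* \cup Q$ forces $M' \subseteq M^*$ or $M' \subseteq Q$. The first is impossible by incomparability of distinct maximal ideals, while the second forces $M' = Q$, contradicting the hypothesis that $Q$ is not maximal. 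Hence $\langle S \rangle = R_e$, and stitching this with the triple manipulation of the previous paragraph closes the proof. Everything outside this generation lemma is routine bookkeeping with permutations of the triple.
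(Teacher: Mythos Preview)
Your argument is correct, and the overall strategy coincides with the paper's: both observe that for any nonunit $r\in R_e$ with $r\notin Q=(\phi(I):_{R_e}ab)$ one gets $ar\in I$ (or else $b\in I$), and both use the hypotheses on $R_e$ and on $Q$ to produce enough such $r$'s to force $a\in I$. The difference is in how that last step is carried out. The paper works by hand: it fixes a maximal ideal $\mathfrak m\supseteq Q$, picks $c$ in a second maximal ideal $\mathfrak q$ with $c\notin\mathfrak m$, uses $c$ and an element of the form $1+xc\in\mathfrak m$ (and, if necessary, an auxiliary $y\in\mathfrak m\setminus Q$) to write $a$ explicitly as an $R_e$-combination of two or three products $ar$ with $r$ nonunit outside $Q$. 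Your route replaces this explicit bookkeeping with the generation lemma $\langle S\rangle=R_e$, proved via the two-ideal avoidance $M'\subseteq M^{*}\cup Q\Rightarrow M'\subseteq M^{*}$ or $M'\subseteq Q$; note that for a union of two ideals no primeness is needed, so your parenthetical ``using that $M^{*}$ is prime'' can be dropped. Your packaging is a bit more conceptual and avoids the case split on whether $1+xc\in Q$; the paper's version is more constructive, exhibiting the combination directly. Either way, the hypotheses enter at exactly the same point.
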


\begin{proof}
Suppose that $I$ is an $e$-$\phi$-$1$-absorbing prime ideal of $R$. Let
$a,b\in R_{e}$ such that $ab\in I-\phi(I)$. If $a$ or $b$ is unit, then $a\in
I$ or $b\in I$, as needed. Suppose that $a,b$ are nonunits. Since
$ab\notin\phi(I)$, $(\phi(I):_{R_{e}}ab)$ is proper. Let $\mathfrak{m}$ be a
maximal ideal of $R_{e}$ containing $(\phi(I):_{R_{e}}ab)$. Since $R_{e}$ is
not local ring, there exists another maximal ideal $\mathfrak{q}$ of $R_{e}$.
Now, choose $c\in\mathfrak{q}-\mathfrak{m}$. Then $c\notin(\phi(I):_{R_{e}%
}ab)$, and so we have $(ca)b\in I-\phi(I)$. Since $I$ is an $e$-$\phi$%
-$1$-absorbing prime ideal of $R$, we get either $ca\in I$ or $b\in I$. If
$b\in I$, then we are done. Suppose that $ca\in I$. Then as $c\notin
\mathfrak{m}$, there exists $x\in R_{e}$ such that $1+xc\in\mathfrak{m}$. Note
that $1+xc$ is nonunit. If $1+xc\notin(\phi(I):_{R_{e}}ab)$, then we have
$(1+xc)ab\in I-\phi(I)$ implying $(1+xc)a\in I$ and so $a\in I$ since $ca\in
I$. Assume that $1+xc\in(\phi(I):_{R_{e}}ab)$, that is, $ab(1+xc)\in\phi(I)$.
Choose $y\in\mathfrak{m}-(\phi(I):_{R_{e}}ab)$. Then we have $(1+xc+y)ab\in
I-\phi(I)$. On the other hand, since $1+xc+y\in\mathfrak{m}$, $1+xc+y$ is
nonunit. This implies that $(1+xc+y)a\in I$. Also, since $yab\in I-\phi(I)$,
we get $ya\in I$. Then we have $a=(1+xc+y)a-x(ca)-ya\in I$. Therefore, $I$ is
an $e$-$\phi$-prime ideal of $R$. The converse follows from Proposition
\ref{Proposition 1}.
\end{proof}

Let $R$ be a $G$-graded ring and $J$ be a graded ideal of $R$. Then $R/J$ is a
$G$-graded ring by $\left(  R/J\right)  _{g}=(R_{g}+J)/J$ for all $g\in G$.
Moreover, we have the following:

\begin{prop}
(\cite{Saber}, Lemma 3.2) Let $R$ be a graded ring, $J$ be a graded ideal of
$R$ and $I$ be an ideal of $R$ such that $J\subseteq I$. Then $I$ is a graded
ideal of $R$ if and only if $I/J$ is a graded ideal of $R/J$.
\end{prop}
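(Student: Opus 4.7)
The plan is to use the elementwise characterization of graded ideals recalled in the introduction: a subideal $A$ of a graded ring $S$ is graded if and only if for every $a = \sum_{g\in G} a_g \in A$ with $a_g \in S_g$, each homogeneous component $a_g$ lies in $A$. Since $J$ is a graded ideal, the quotient grading $(R/J)_g = (R_g + J)/J$ is well-defined and the projection $\pi\colon R \to R/J$ sends $R_g$ onto $(R/J)_g$; this is the bridge I will use in both directions.

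For the forward implication, I would take an arbitrary coset $\bar a = a + J \in I/J$ with $a \in I$, decompose $a = \sum_{g\in G} a_g$ with $a_g \in R_g$, and invoke the hypothesis that $I$ is graded to conclude $a_g \in I$ for every $g$. Then $\bar a = \sum_{g\in G}(a_g + J)$ is a decomposition of $\bar a$ into components lying in $(R/J)_g$, and each summand belongs to $I/J$. Hence $I/J$ satisfies the componentwise criterion and is a graded ideal of $R/J$.

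For the converse, the key point I would carefully justify is that because $J \subseteq I$, the natural map $I \to I/J$ has the property that $x + J \in I/J$ forces $x \in I$: indeed, $x + J = y + J$ for some $y \in I$ gives $x - y \in J \subseteq I$, whence $x \in I$. Given this, starting from $a \in I$ with $a = \sum_g a_g$ and $a_g \in R_g$, the coset $\bar a$ has homogeneous decomposition $\bar a = \sum_g (a_g + J)$ in $R/J$. The hypothesis that $I/J$ is graded yields $a_g + J \in I/J$ for every $g$, and the observation above then gives $a_g \in I$, so $I$ is graded.

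I do not anticipate a serious obstacle: the only subtlety worth flagging is confirming that the homogeneous decomposition of $\bar a$ in $R/J$ actually comes from the decomposition of $a$ in $R$ (which requires $J$ to be graded, so that the grading on $R/J$ is well-defined), together with the small containment argument $J \subseteq I \Rightarrow (x + J \in I/J \Leftrightarrow x \in I)$ that makes the backward direction clean.
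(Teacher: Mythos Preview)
Your argument is correct and complete. Note, however, that the paper does not actually supply a proof of this proposition: it is quoted verbatim from \cite{Saber} (Lemma~3.2) and simply cited without argument. So there is no ``paper's own proof'' to compare against here; your write-up supplies exactly the standard elementary verification (componentwise criterion in both directions, using that $J$ is graded so the quotient grading is well-defined, and the containment $J\subseteq I$ to pull membership in $I/J$ back to membership in $I$) that one would expect such a citation to stand in for.
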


For any graded ideal $J$ of $R$ define a function $\phi_{J}:GI(R/J)\rightarrow
GI(R/J)\cup\{\emptyset\}$ by $\phi_{J}(I/J)=(\phi(I)+J)/J$ where $J\subseteq
I$ and $\phi_{J}(I/J)=\emptyset$ if $\phi(I)=\emptyset$. Also, note that
$\phi_{J}(I/J)\subseteq I/J$.

\begin{thm}
\label{Theorem 5 (i)} Let $I$ be a graded $\phi$-$1$-absorbing prime ideal of
$R$. Then $I/\phi(I)$ is a graded weakly $1$-absorbing prime ideal of
$R/\phi(I)$.
\end{thm}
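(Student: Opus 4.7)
The plan is to verify the definition of a graded weakly $1$-absorbing prime ideal directly for $I/\phi(I)$ inside $R/\phi(I)$. First, since $\phi(I) \subseteq I$ and both are graded ideals of $R$, the quotient $R/\phi(I)$ is $G$-graded with $(R/\phi(I))_g = (R_g + \phi(I))/\phi(I)$, and by the cited Saber lemma $I/\phi(I)$ is a graded ideal of $R/\phi(I)$; it is proper because $I$ is proper in $R$ by hypothesis.

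Next I would take arbitrary nonunit homogeneous elements $\bar a, \bar b, \bar c \in h(R/\phi(I))$ with $\overline{0} \neq \bar a \bar b \bar c \in I/\phi(I)$, and lift them to homogeneous representatives $a,b,c \in h(R)$ of the same degree. The key small observation is that if any of $a,b,c$ were a unit in $R$, its image would be a unit in $R/\phi(I)$, contradicting the choice of $\bar a,\bar b,\bar c$; hence $a,b,c$ are nonunits in $R$.

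From $\bar a \bar b \bar c \in I/\phi(I)$ we get $abc \in I$, and from $\bar a \bar b \bar c \neq \overline{0}$ we get $abc \notin \phi(I)$, so $abc \in I - \phi(I)$. Applying the hypothesis that $I$ is a graded $\phi$-$1$-absorbing prime ideal of $R$ gives $ab \in I$ or $c \in I$, which passes to $\bar a \bar b \in I/\phi(I)$ or $\bar c \in I/\phi(I)$, completing the verification.

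There is no serious obstacle here; the argument is essentially a routine transfer through the quotient. The only point that needs to be stated carefully is the lifting of nonunits from $R/\phi(I)$ back to $R$, since the converse (nonunits in $R$ remaining nonunits modulo $\phi(I)$) can fail and is precisely why the companion Theorem~\ref{Theorem 5 (ii)} needs the extra hypothesis $U(R/\phi(I)) = \{a + \phi(I) : a \in U(R)\}$.
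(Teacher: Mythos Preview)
Your proof is correct and follows essentially the same route as the paper's own argument: take nonunit homogeneous elements in the quotient with nonzero product in $I/\phi(I)$, lift to nonunit homogeneous elements of $R$ with product in $I-\phi(I)$, apply the graded $\phi$-$1$-absorbing prime hypothesis, and push the conclusion back down. You are in fact slightly more careful than the paper in spelling out the lifting of homogeneous representatives and the reason why nonunits in $R/\phi(I)$ lift to nonunits in $R$.
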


\begin{proof}
Let $0+\phi(I)\neq(a+\phi(I))(b+\phi(I))(c+\phi(I))=abc+\phi(I)\in I/\phi(I)$
for some nonunits $a+\phi(I), b+\phi(I), c+\phi(I)\in R/\phi(I)$. Then $a, b,
c$ are nonunits in $R$ and $abc\in I-\phi(I)$. Since $I$ is a graded $\phi
$-$1$-absorbing prime ideal of $R$, $ab\in I$ or $c\in I$, and then we get
$(a+\phi(I))(b+\phi(I))=ab+\phi(I)\in I/\phi(I)$ or $c+\phi(I)\in I/\phi(I)$.
Hence, $I/\phi(I)$ is a graded weakly $1$-absorbing prime ideal of $R/\phi(I)$.
\end{proof}

Similarly, one can prove the following:

\begin{thm}
\label{Theorem 5 (iii)} Let $I$, $J$ be two graded ideals of $R$ with
$J\subseteq I$ and $I$ be a graded $\phi$-$1$-absorbing prime ideal of $R$.
Then $I /J$ is a graded $\phi_{J}$-$1$-absorbing prime ideal of $R/J$.
\end{thm}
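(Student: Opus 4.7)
The plan is to imitate the argument of Theorem \ref{Theorem 5 (i)} almost verbatim, since that result is the special case $J=\phi(I)$ of the present one. Concretely, I would take an arbitrary homogeneous triple of nonunits $\overline{a},\overline{b},\overline{c}\in h(R/J)$ whose product lies in $(I/J)-\phi_J(I/J)$, lift to representatives $a,b,c\in h(R)$ via the induced grading $(R/J)_g=(R_g+J)/J$, show that the lifted triple satisfies the hypotheses of the graded $\phi$-$1$-absorbing prime condition in $R$, and then push the conclusion back down to $R/J$.

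The main step is to verify that all the relevant conditions transfer. Since units in $R$ always descend to units in $R/J$, the nonunit hypothesis on the cosets forces each of $a,b,c$ to be a nonunit in $R$. From $abc+J\in I/J$ together with $J\subseteq I$ one obtains $abc\in I$. Moreover, if $abc$ were in $\phi(I)$, then $abc+J\in(\phi(I)+J)/J=\phi_J(I/J)$, contradicting our hypothesis; hence $abc\in I-\phi(I)$. Applying the assumption that $I$ is a graded $\phi$-$1$-absorbing prime ideal of $R$ yields $ab\in I$ or $c\in I$, whence $\overline{a}\,\overline{b}=ab+J\in I/J$ or $\overline{c}=c+J\in I/J$ in $R/J$. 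Since $I$ is proper and $J\subseteq I$, the quotient $I/J$ is a proper graded ideal of $R/J$, so this establishes the $\phi_J$-$1$-absorbing prime condition.

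I do not expect any real obstacle: the proof is essentially bookkeeping. The only mild subtleties, namely the transfer of the nonunit condition from cosets back to representatives and the extraction of $abc\notin\phi(I)$ from $\overline{abc}\notin\phi_J(I/J)$, are one-line contrapositive observations. This is presumably why the authors signal the argument with \emph{``Similarly, one can prove the following''} rather than writing it out in detail.
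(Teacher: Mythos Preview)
Your proposal is correct and is precisely the argument the paper intends: it explicitly introduces this theorem with ``Similarly, one can prove the following'' immediately after Theorem~\ref{Theorem 5 (i)}, and your write-up is the natural adaptation of that proof with $J$ in place of $\phi(I)$. The only points worth checking---lifting homogeneous cosets to homogeneous representatives via $(R/J)_g=(R_g+J)/J$, the contrapositive transfer of nonunits, and deducing $abc\notin\phi(I)$ from $abc+J\notin\phi_J(I/J)$---you have handled correctly.
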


\begin{thm}
\label{Theorem 5 (ii)} Let $I/\phi(I)$ be a graded weakly $1$-absorbing prime
ideal of $R/\phi(I)$ and $U(R/\phi(I)) =\left\{  a + \phi(I) : a\in
U(R)\right\}  $. Then $I$ is a graded $\phi$-$1$-absorbing prime ideal of $R$.
\end{thm}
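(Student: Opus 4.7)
The plan is to pass to the quotient ring $R/\phi(I)$ and invoke the hypothesis that $I/\phi(I)$ is a graded weakly $1$-absorbing prime ideal there. Specifically, I would start by assuming $abc \in I - \phi(I)$ for some nonunit homogeneous elements $a,b,c \in h(R)$, with the goal of showing $ab \in I$ or $c \in I$.

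First I would note that since $a,b,c \in h(R)$, the cosets $a+\phi(I), b+\phi(I), c+\phi(I)$ are homogeneous elements of the induced graded ring $R/\phi(I)$. Next, I would verify the three hypotheses needed to apply the graded weakly $1$-absorbing prime property to the quotient: (i) the product $(a+\phi(I))(b+\phi(I))(c+\phi(I)) = abc + \phi(I)$ lies in $I/\phi(I)$ since $abc \in I$; (ii) this product is nonzero in $R/\phi(I)$ since $abc \notin \phi(I)$; and (iii) each of $a+\phi(I), b+\phi(I), c+\phi(I)$ is a nonunit in $R/\phi(I)$. This last point is precisely where the assumption $U(R/\phi(I)) = \{a + \phi(I) : a \in U(R)\}$ is essential: if, say, $a+\phi(I)$ were a unit in $R/\phi(I)$, then by the hypothesis we would have $a+\phi(I) = a'+\phi(I)$ for some $a' \in U(R)$, but this means $a - a' \in \phi(I) \subseteq I$, and so this alone does not immediately contradict $a$ being a nonunit. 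Instead, one argues contrapositively: because $a$ is a nonunit in $R$, the coset $a+\phi(I)$ cannot belong to $\{a'+\phi(I) : a' \in U(R)\} = U(R/\phi(I))$, and hence $a+\phi(I)$ is a nonunit; similarly for $b$ and $c$.

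Having established these, I would apply the graded weakly $1$-absorbing prime property of $I/\phi(I)$ to conclude that either $(a+\phi(I))(b+\phi(I)) = ab + \phi(I) \in I/\phi(I)$, or $c + \phi(I) \in I/\phi(I)$. In the first case, $ab + \phi(I) = x + \phi(I)$ for some $x \in I$, whence $ab - x \in \phi(I) \subseteq I$ and so $ab \in I$. In the second case, an identical argument yields $c \in I$.

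The only delicate point is the nonunit verification in step (iii); the rest is essentially bookkeeping with cosets. It is worth emphasizing that without the assumption on $U(R/\phi(I))$, one could have a nonunit $a \in R$ whose image $a + \phi(I)$ becomes a unit in the quotient, at which point the weakly $1$-absorbing prime hypothesis in $R/\phi(I)$ would no longer apply to the triple in question, and the argument would break down. Hence this converse direction genuinely requires the unit-lifting assumption, in contrast with Theorem \ref{Theorem 5 (i)}.
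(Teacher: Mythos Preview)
Your proposal is correct and follows essentially the same approach as the paper's proof: pass to $R/\phi(I)$, use the hypothesis on $U(R/\phi(I))$ to ensure the cosets $a+\phi(I),\,b+\phi(I),\,c+\phi(I)$ are nonunits, apply the graded weakly $1$-absorbing prime property of $I/\phi(I)$, and pull the conclusion back to $R$. Your discussion of the nonunit verification in step~(iii) is more careful than the paper's (which simply asserts the implication), though both arguments ultimately rely on interpreting the hypothesis $U(R/\phi(I)) = \{a+\phi(I) : a \in U(R)\}$ as saying that $r+\phi(I)$ is a unit in the quotient only when $r$ itself is a unit in $R$.
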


\begin{proof}
Let $a, b, c\in h(R)$ be nonunits such that $abc\in I-\phi(I)$. Then we have
$0+\phi(I)\neq(a+\phi(I))(b+\phi(I))(c+\phi(I))=abc+\phi(I)\in I/\phi(I)$.
Since $U(R/\phi(I))=\left\{  a + \phi(I):a\in U(R)\right\}  $, $a+\phi(I),
b+\phi(I), c+\phi(I)$ are nonunits in $R/\phi(I)$. Since $I/\phi(I)$ is a
graded weakly $1$-absorbing prime ideal, we have either $(a+\phi
(I))(b+\phi(I))=ab+\phi(I)\in I/\phi(I)$ or $c+\phi(I)\in I/\phi(I)$, which
implies $ab\in I$ or $c\in I$. Therefore, $I$ is a graded $\phi$-$1$-absorbing
prime ideal of $R$.
\end{proof}

Let $R$ be a $G$-graded ring and $S\subseteq h(R)$ be a multiplicative set.
Then $S^{-1}R$ is a $G$-graded ring with $(S^{-1}R)_{g}=\left\{  \frac{a}%
{s}:a\in R_{h},s\in S\cap R_{hg^{-1}}\right\}  $ for all $g\in G$. If $I$ is a
graded ideal of $R$, then $S^{-1}I$ is a graded ideal of $S^{-1}R$. Consider
the function $\phi:GI(R)\rightarrow GI(R)\cup\left\{  \emptyset\right\}  $.
Define $\phi_{S}:GI(S^{-1}R)\rightarrow GI(S^{-1}R)\cup\left\{  \emptyset
\right\}  $ by $\phi_{S}(S^{-1}I)=S^{-1}\phi(I)$ and $\phi_{S}(S^{-1}%
I)=\emptyset$ if $\phi(I)=\emptyset$. It is easy to see that $\phi_{S}%
(S^{-1}I)\subseteq S^{-1}I$.

\begin{thm}
\label{Theorem 6} Let $R$ be a graded ring and $S\subseteq h(R)$ be a
multiplicative set. If $I$ is a graded $\phi$-$1$-absorbing prime ideal of $R$
with $I\cap S=\emptyset$, then $S^{-1}I$ is a graded $\phi_{S}$-$1$-absorbing
prime ideal of $S^{-1}R$.
\end{thm}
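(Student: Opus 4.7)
The plan is to unwind the definition of the localization and transfer the $\phi$-$1$-absorbing condition from $R$ to $S^{-1}R$. Start with homogeneous nonunits $\frac{a}{s},\frac{b}{t},\frac{c}{u}\in S^{-1}R$ whose product lies in $S^{-1}I-\phi_{S}(S^{-1}I)$. From the explicit description of the grading on $S^{-1}R$ recalled just above the statement, each of $a,b,c$ is homogeneous in $R$. A preliminary observation is that $a,b,c$ must themselves be nonunits in $R$: if, say, $a$ were a unit in $R$, then since $a\in h(R)$ its inverse $a^{-1}$ would also be homogeneous, and $\frac{sa^{-1}}{1}$ would provide a homogeneous inverse of $\frac{a}{s}$, contradicting the choice of $\frac{a}{s}$ as nonunit. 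The hypothesis $I\cap S=\emptyset$ enters here as well, ensuring that $S^{-1}I$ is a proper graded ideal of $S^{-1}R$, so that it is a candidate for being $\phi_{S}$-$1$-absorbing prime at all.

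Next I would clear denominators: from $\frac{abc}{stu}\in S^{-1}I$ the standard characterization of localized ideals yields some $s_{1}\in S$ with $s_{1}abc\in I$. Before applying the hypothesis on $I$, one must check that $s_{1}abc\notin\phi(I)$; for if it were, then
\[
\frac{abc}{stu}=\frac{s_{1}abc}{s_{1}stu}\in S^{-1}\phi(I)=\phi_{S}(S^{-1}I),
\]
contradicting the assumption on the triple. Note that $s_{1}a\in h(R)$ is nonunit (otherwise $a$ would itself be a unit), so $s_{1}a,b,c$ forms a triple of homogeneous nonunits with $(s_{1}a)bc\in I-\phi(I)$. Since $I$ is a graded $\phi$-$1$-absorbing prime ideal of $R$, we conclude that either $s_{1}ab=(s_{1}a)b\in I$ or $c\in I$.

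To finish, in the first case $\frac{ab}{st}=\frac{s_{1}ab}{s_{1}st}\in S^{-1}I$, and in the second $\frac{c}{u}\in S^{-1}I$; in either case the defining condition for $S^{-1}I$ to be a graded $\phi_{S}$-$1$-absorbing prime ideal is satisfied. I expect the only step requiring real care to be the transition from $\frac{abc}{stu}\notin\phi_{S}(S^{-1}I)$ to $s_{1}abc\notin\phi(I)$, which hinges on the precise definition $\phi_{S}(S^{-1}I)=S^{-1}\phi(I)$; the remainder is the routine \emph{clear denominators, apply the hypothesis, push back to the localization} pattern familiar from standard localization arguments.
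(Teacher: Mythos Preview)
Your proof is correct and follows essentially the same route as the paper's: clear denominators to get $s_{1}abc\in I$, rule out $s_{1}abc\in\phi(I)$ via the definition of $\phi_{S}$, then apply the $\phi$-$1$-absorbing hypothesis to the homogeneous nonunits $s_{1}a,b,c$ and push the conclusion back to $S^{-1}I$. You are in fact more careful than the paper, which simply asserts that $va,b,c$ are nonunits in $R$ without the justification you supply.
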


\begin{proof}
Let $\frac{a}{s}\frac{b}{t}\frac{c}{u}\in S^{-1}I-\phi_{S}(S^{-1}I)$ for some
nonunits in $h(S^{-1}R)$. Then there exists $v\in S$ such that $vabc\in I$. If
$vabc\in\phi(I),\ $then we have $\frac{a}{s}\frac{b}{t}\frac{c}{u}=\frac
{vabc}{vstu}\in S^{-1}\phi(I)=\phi_{S}(S^{-1}I)$ which is a contradiction. So
we get $vabc\in I-\phi(I)$. Since $va,b,c$ are nonunits in $R$ and $I$ is a
graded $\phi$-$1$-absorbing prime ideal, we get $vab\in I$ or $c\in I$. This
implies $\frac{a}{s}\frac{b}{t}=\frac{vab}{vst}\in S^{-1}I$ or $\frac{c}{u}\in
S^{-1}I$. Hence, $S^{-1}I$ is a graded $\phi_{S}$-$1$-absorbing prime ideal of
$S^{-1}R$.
\end{proof}

Let $R$ and $T$ be two $G$-graded rings. Then $R\times T$ is a $G$-graded ring
by $(R\times T)_{g}=R_{g}\times T_{g}$ for all $g\in G$. Moreover, we have the following:

\begin{prop}
(\cite{Saber}, Lemma 3.12) Let $R$ and $T$ be two graded rings. Then
$L=I\times J$ is a graded ideal of $R\times T$ if and only if $I$ is a graded
ideal of $R$ and $J$ is a graded ideal of $T$.
\end{prop}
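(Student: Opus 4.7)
The plan is to prove both implications by directly unpacking the definition of a graded ideal, using the fact that the grading on $R\times T$ is componentwise, i.e.\ $(R\times T)_g = R_g\times T_g$. Before doing either direction I would first note that $L=I\times J$ is automatically an ideal of $R\times T$ whenever $I$ is an ideal of $R$ and $J$ is an ideal of $T$ (and conversely the projections of an ideal of $R\times T$ of product form are ideals), so the entire content of the statement is about the homogeneity of the decomposition.

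For the forward direction, suppose $L=I\times J$ is a graded ideal of $R\times T$. To show $I$ is a graded ideal of $R$, I would take an arbitrary $a\in I$ and write $a=\sum_{g\in G}a_g$ with $a_g\in R_g$; then $(a,0)\in I\times J = L$, and in $R\times T$ the homogeneous decomposition of $(a,0)$ is $\sum_{g\in G}(a_g,0)$ with $(a_g,0)\in R_g\times T_g = (R\times T)_g$. Because $L$ is graded, each component $(a_g,0)$ lies in $L=I\times J$, hence $a_g\in I$ for every $g$. The argument that $J$ is graded in $T$ is the mirror image, using $(0,b)$ for $b\in J$.

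For the backward direction, suppose $I$ is a graded ideal of $R$ and $J$ is a graded ideal of $T$. Take $(a,b)\in L$ and write $a=\sum_{g\in G}a_g$ and $b=\sum_{g\in G}b_g$ with $a_g\in R_g$, $b_g\in T_g$; since $I$ and $J$ are graded we have $a_g\in I$ and $b_g\in J$ for all $g$. Then
\[
(a,b)=\sum_{g\in G}(a_g,b_g), \qquad (a_g,b_g)\in R_g\times T_g=(R\times T)_g,
\]
and each $(a_g,b_g)\in I\times J = L$, which is exactly the condition for $L$ to be a graded ideal of $R\times T$.

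There is essentially no obstacle here; the whole proof is bookkeeping once one observes that homogeneous components of elements of $R\times T$ are pairs of homogeneous components in $R$ and $T$ separately. If I wanted to be even more economical, I could bundle both directions into the single equivalence ``$(a,b)\in L$ has all homogeneous components in $L$'' $\Longleftrightarrow$ ``$a$ has all homogeneous components in $I$ and $b$ has all homogeneous components in $J$,'' which is immediate from the componentwise grading, but writing it as two separate implications as above is clearer.
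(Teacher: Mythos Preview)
Your argument is correct: once one uses the componentwise grading $(R\times T)_g=R_g\times T_g$, the equivalence reduces to the observation that the homogeneous components of $(a,b)$ are exactly the pairs $(a_g,b_g)$, so $(a,b)$ has all components in $I\times J$ if and only if $a$ has all components in $I$ and $b$ has all components in $J$. Note, however, that the paper does not supply its own proof of this proposition; it is quoted verbatim as Lemma~3.12 of \cite{Saber} and used as a black box, so there is no in-paper argument to compare yours against.
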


Let $R$ and $T$ be two graded rings, $\phi:GI(R)\rightarrow GI(R)\cup\left\{
\emptyset\right\}  $, $\psi:GI(T)\rightarrow GI(T)\cup\left\{  \emptyset
\right\}  $ be two functions. Suppose that $\theta:GI(R\times T)\rightarrow
GI(R\times T)\cup\left\{  \emptyset\right\}  $ is a function defined by
$\theta(I\times J)=\phi(I)\times\psi(J)$ for each graded ideals $I,J$ of $R,T$
respectively. Then $\theta$ is denoted by $\theta=\phi\times\psi$.

\begin{thm}
\label{Theorem 7 (1)} Let $R$ and $T$ be two graded rings, $\phi
:GI(R)\rightarrow GI(R)\cup\left\{  \emptyset\right\}  $, $\psi
:GI(T)\rightarrow GI(T)\cup\left\{  \emptyset\right\}  $ be two functions.
Suppose that $\theta=\phi\times\psi$. If $L=I\times J$ is a graded $\theta
$-$1$-absorbing prime ideal of $R\times T$, then $I$ is a graded $\phi$-prime
ideal of $R$ and $J$ is a graded $\psi$-prime ideal of $T$.
\end{thm}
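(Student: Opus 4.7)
The plan is to prove that $I$ is graded $\phi$-prime in $R$, with the statement for $J$ following by a symmetric argument. I would start by fixing $a,b\in h(R)$ with $ab\in I-\phi(I)$ and try to conclude $a\in I$ or $b\in I$. The guiding idea is to convert the two-factor primeness condition in $R$ into a three-factor $1$-absorbing condition in $R\times T$ by padding with an auxiliary nonunit homogeneous factor whose only role is to supply the extra slot without disturbing the relevant coordinate.

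Concretely, I would choose the triple $x=(a,0_T)$, $y=(1_R,0_T)$, $z=(b,0_T)$ in $R\times T$. If $a\in R_g$ and $b\in R_h$, then $x,y,z$ are homogeneous of degrees $g, e, h$ respectively, since $0_T$ lies in every component $T_k$. Each element is a nonunit in $R\times T$, because a unit in a product ring requires units in both coordinates and $T$ has nonzero unity, so any element with $0_T$ as second coordinate is nonunit. The product is $xyz=(ab,0_T)$, which lies in $L=I\times J$ (as $ab\in I$ and $0_T\in J$) but not in $\theta(L)=\phi(I)\times\psi(J)$, since $ab\notin\phi(I)$. Applying the graded $\theta$-$1$-absorbing prime hypothesis on $L$ to the triple $x,y,z$ then gives either $xy=(a,0_T)\in L$ or $z=(b,0_T)\in L$, that is, $a\in I$ or $b\in I$.

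For $J$ I would run the mirror-image argument starting from $c,d\in h(T)$ with $cd\in J-\psi(J)$ and using the triple $(0_R,c),(0_R,1_T),(0_R,d)$. I do not expect any genuine obstacle; the only tactical point is the choice of the auxiliary factor $(1_R,0_T)$ (resp.\ $(0_R,1_T)$), which simultaneously is a homogeneous nonunit of $R\times T$ and is neutral on the coordinate of interest, thereby threading the trinomial $1$-absorbing property into a statement about the binomial prime property of the factor ideal. Note also that no case split is needed according to whether $a$ or $b$ is a unit, because the second coordinate $0_T$ already ensures that each of $x,y,z$ is nonunit regardless of the ambient status of $a,b$ in $R$.
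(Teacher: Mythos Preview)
Your proof is correct and follows essentially the same approach as the paper: both arguments take $a,b\in h(R)$ with $ab\in I-\phi(I)$, pad with the auxiliary homogeneous nonunit $(1_R,0_T)$ to form the triple $(a,0_T)(1_R,0_T)(b,0_T)=(ab,0_T)\in L-\theta(L)$, and then read off $a\in I$ or $b\in I$ from the $\theta$-$1$-absorbing property. Your write-up is slightly more explicit about why the chosen elements are homogeneous and nonunit, but the underlying idea is identical.
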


\begin{proof}
Let $a, b\in h(R)$ such that $ab\in I-\phi(I)$. Then we have $(a, 0)(1, 0)(b,
0) = (ab, 0)\in L-\theta(L)$ for some nonunits $(a, 0), (1, 0), (b, 0)\in
h(R\times T)$. Since $L$ is a graded $\theta$-$1$-absorbing prime ideal of
$R\times T$, we get either $(a, 0)(1, 0) = (a, 0)\in L$ or $(b, 0)\in L$
implying that $a\in I$ or $b\in I$. Therefore, $I$ is a graded $\phi$-prime
ideal of $R$. Similarly, $J$ is a graded $\psi$-prime ideal of $T$.
\end{proof}

\begin{thm}
\label{Theorem 7 (2)} Let $R$ and $T$ be two graded rings, $\phi
:GI(R)\rightarrow GI(R)\cup\left\{  \emptyset\right\}  $, $\psi
:GI(T)\rightarrow GI(T)\cup\left\{  \emptyset\right\}  $ be two functions.
Suppose that $\theta=\phi\times\psi$. If $L=I\times J$ is a graded $\theta
$-$1$-absorbing prime ideal of $R\times T$ and $\theta(L_{e})\neq L_{e}$, then
$I=R$ or $J=T$.
\end{thm}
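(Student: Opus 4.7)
The plan is to argue by contradiction: assume $I\neq R$ and $J\neq T$, and then exhibit a graded triple of nonunits in $R\times T$ that violates the $\theta$-$1$-absorbing property of $L$.

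First, I would unpack the hypothesis $\theta(L_{e})\neq L_{e}$, reading it as $\theta(L)_{e}\neq L_{e}$. Since $L=I\times J$ and $\theta(L)=\phi(I)\times\psi(J)$, the identity components are $L_{e}=I_{e}\times J_{e}$ and $\theta(L)_{e}=\phi(I)_{e}\times\psi(J)_{e}$, so the hypothesis forces at least one of $\phi(I)_{e}\subsetneq I_{e}$ or $\psi(J)_{e}\subsetneq J_{e}$. By symmetry, assume the former and choose $a\in I_{e}\setminus\phi(I)_{e}$; in particular $a\in R_{e}$, $a\in I$, and $a\notin\phi(I)$.

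Next, I would consider the triple $(1_{R},0),(1_{R},0),(a,1_{T})$ in $h(R\times T)$. Each is homogeneous of degree $e$: the first two trivially, and $(a,1_{T})$ precisely because $a\in R_{e}$ and $1_{T}\in T_{e}$. Each is a nonunit: $(1_{R},0)$ cannot be inverted since its second coordinate is zero, and $(a,1_{T})$ is a nonunit because $a$ is a nonunit in $R$ (being a member of the proper ideal $I$). The product equals $(a,0)$, which lies in $L$ since $a\in I$, and lies outside $\theta(L)$ since $a\notin\phi(I)$. Hence $(1_{R},0)(1_{R},0)(a,1_{T})\in L-\theta(L)$.

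Finally, applying the $\theta$-$1$-absorbing property of $L$ to this triple gives either $(1_{R},0)(1_{R},0)=(1_{R},0)\in L$ or $(a,1_{T})\in L$. The first forces $1_{R}\in I$, contradicting $I\neq R$; the second forces $1_{T}\in J$, contradicting $J\neq T$. This yields the required contradiction, so $I=R$ or $J=T$. The symmetric case $\psi(J)_{e}\subsetneq J_{e}$ is handled by the analogous triple $(0,1_{T}),(0,1_{T}),(1_{R},b)$ with $b\in J_{e}\setminus\psi(J)_{e}$. The main technical subtlety is that $(a,1_{T})$ must be homogeneous, and this is precisely why one needs the identity-component hypothesis $\theta(L_{e})\neq L_{e}$ rather than merely $\theta(L)\neq L$: a homogeneous witness $a\in I-\phi(I)$ of arbitrary degree $g$ would not pair with $1_{T}$ into a homogeneous element unless $g=e$.
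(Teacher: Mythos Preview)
Your proof is correct and follows essentially the same route as the paper: from $\theta(L)_{e}\neq L_{e}$ you extract a homogeneous $a\in I_{e}\setminus\phi(I)_{e}$, form the triple $(1,0)(1,0)(a,1)=(a,0)\in L-\theta(L)$, and apply the $\theta$-$1$-absorbing property to force $1\in I$ or $1\in J$. Your contradiction framing and your explicit check that $(a,1_{T})$ is homogeneous of degree $e$ make the argument slightly more careful than the paper's version, but the core idea is identical.
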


\begin{proof}
Since $\theta(L_{e})\neq L_{e}$, either $\phi(I_{e})\neq I_{e}$ or $\psi
(J_{e})\neq J_{e}$. Suppose that $\phi(I_{e})\neq I_{e}$. Then there exists
$a\in I_{e}-\phi(I_{e})$ that is $a\in I-\phi(I)$. This implies that
$(1,0)(1,0)(a,1)=(a,0)\in L-\theta(L)$. Then we have either $1\in I$ or $1\in
J$, that is $I=R$ or $J=T$. Similarly, if $\psi(J_{e})\neq J_{e}$, we have
either $I=R$ or $J=T$.
\end{proof}

\begin{thm}
\label{Theorem 7 (3)} Let $R$ and $T$ be two graded rings, $\phi
:GI(R)\rightarrow GI(R)\cup\left\{  \emptyset\right\}  $, $\psi
:GI(T)\rightarrow GI(T)\cup\left\{  \emptyset\right\}  $ be two functions.
Suppose that $\theta=\phi\times\psi$. Suppose that $L=I\times J$ is a graded
$\theta$-$1$-absorbing prime ideal of $R\times T$ and $\theta(L_{e})\neq
L_{e}.\ $If $\phi(R_{e})\neq R_{e}\ $is not a unique maximal ideal of
$R_{e}\ $and $\psi(T_{e})\neq T_{e}\ $is not a unique maximal ideal of $T_{e}%
$, then either $L=R\times J$ and $J_{e}$ is a prime ideal of $T_{e}$ or
$L=I\times T$ and $I_{e}$ is a prime ideal of $T_{e}$
\end{thm}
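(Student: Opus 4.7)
My plan is to invoke Theorem \ref{Theorem 7 (2)} to split into the cases $L=R\times J$ and $L=I\times T$, and in each case produce an explicit homogeneous triple at degree $e$ whose product lies in $L-\theta(L)$, so that the $\theta$-$1$-absorbing property can be applied directly. I read $\phi(R_e)$ as $\phi(R)_e$ (the $e$-component of the graded ideal $\phi(R)$), and I read the final clause ``$I_{e}$ is a prime ideal of $T_{e}$'' as a typo for ``$I_{e}$ is a prime ideal of $R_{e}$''; the two conclusions are symmetric.

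First, since $\theta(L_e)\neq L_e$, Theorem \ref{Theorem 7 (2)} gives $I=R$ or $J=T$. I would treat the case $L=R\times J$ in full; the case $L=I\times T$ follows by exchanging the roles of $(R,\phi)$ and $(T,\psi)$. So assume $L=R\times J$ and take $b,c\in T_e$ with $bc\in J_e$, aiming for $b\in J_e$ or $c\in J_e$. If $b$ or $c$ is a unit in $T_e$, the conclusion is immediate, so I would reduce to $b,c$ both nonunits.

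The key step is the identity
\[
(r,1)\,(1,b)\,(1,c)=(r,bc)\in R\times J=L
\]
for a suitably chosen nonunit $r\in R_e$. Each factor lies in $(R\times T)_e$ and is a nonunit (since $(x,y)$ is a unit in $R\times T$ iff both coordinates are units). If $r$ is chosen so that $(r,bc)\notin\theta(L)=\phi(R)\times\psi(J)$, the $\theta$-$1$-absorbing hypothesis delivers $(r,1)(1,b)=(r,b)\in L$, hence $b\in J_e$, or $(1,c)\in L$, hence $c\in J_e$.

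The main obstacle is the choice of $r$. When $bc\notin\psi(J)_e$ any nonunit $r\in R_e$ works, and such an $r$ exists because the hypothesis ``$\phi(R)_e$ is not the unique maximal ideal of $R_e$'' prevents $R_e$ from being a field. When $bc\in\psi(J)_e$, I additionally need $r\notin\phi(R)_e$; the hypothesis either supplies a maximal ideal $M\supsetneq\phi(R)_e$ (if $\phi(R)_e$ is not maximal) or a second maximal ideal $M\neq\phi(R)_e$ (if $\phi(R)_e$ is maximal but not unique), and in either situation any $r\in M\setminus\phi(R)_e$ is simultaneously a nonunit and outside $\phi(R)_e$. With $r$ in hand, the absorbing step finishes the first case, and the same construction with $(R,\phi)$ and $(T,\psi)$ interchanged completes the proof.
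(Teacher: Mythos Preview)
Your proposal is correct and follows essentially the same route as the paper: invoke Theorem \ref{Theorem 7 (2)} to reduce to $L=R\times J$, then for nonunits $b,c\in T_e$ with $bc\in J_e$ apply the $\theta$-$1$-absorbing hypothesis to the triple $(r,1)(1,b)(1,c)=(r,bc)$ for a suitably chosen nonunit $r\in R_e\setminus\phi(R)_e$. The only difference is cosmetic: the paper picks such an $r$ once and for all (so that $(r,bc)\notin\theta(L)$ automatically, regardless of whether $bc\in\psi(J)_e$), whereas you split into the two cases $bc\notin\psi(J)_e$ and $bc\in\psi(J)_e$ before arriving at the same choice; your justification for the existence of $r$ is in fact more detailed than the paper's.
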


\begin{proof}
By Theorem \ref{Theorem 7 (2)}, we know that $I=R\ $or $J=T.\ $Without loss of
generality, we may assume that $I=R.\ $Let $xy\in J_{e}$ for some elemts
$x,y\in T_{e}$. If $x$ or $y$ is unit, we are done. So assume that $x,y$ are
nonunits in $T_{e}$. Since $\phi(R_{e})\neq R_{e}$ is not a unique maximal
ideal of $R_{e}$, there exists a nonunit element $a\in R_{e}-\phi(R_{e})$.
Then we have $(a,1)(1,x)(1,y)=(a,xy)\in L-\theta(L)$. Since $I$ is a graded
$\theta$-$1$-absorbing prime ideal of $R\times T$, we have either
$(a,1)(1,x)=(a,x)\in L$ or $(1,y)\in L$ implying $x\in J$ or $y\in J$ that is
either $x\in T_{e}\cap J=J_{e}$ or $y\in T_{e}\cap J=J_{e}$. Therefore,
$J_{e}$ is a prime ideal of $T_{e}$.
\end{proof}

\section{Graded von Neumann regular Rings}

In this section, we introduce and study the concept of graded von Neumann
regular rings. We prove that if $R$ is a graded von Neumann regular ring and
$x\in h(R)$, then $Rx$ is a graded almost $1$-absorbing prime ideal of $R$
(Theorem \ref{5}).

\begin{defn}
Let $R$ be a $G$-graded ring. Then $R$ is said to be a graded von Neumann
regular ring if for each $a\in R_{g}$ ($g\in G$), there exists $x\in
R_{g^{-1}}$ such that $a=a^{2}x$.
\end{defn}

A graded commutative ring $R$ with unity is said to be a graded field if every
nonzero homogeneous element of $R$ is unit \cite{Saber}. Clearly, every field
is a graded field, however, the converse is not true in general, see
(\cite{Saber}, Example 3.6).

\begin{lem}
\label{1} Let $R$ be a graded ring. If $r\in R_{g}$ is a unit, then $r^{-1}\in
R_{g^{-1}}$.
\end{lem}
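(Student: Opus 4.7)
The plan is to exploit the uniqueness of the homogeneous decomposition in $R = \bigoplus_{g \in G} R_g$ together with the uniqueness of multiplicative inverses in the commutative ring $R$.

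First I would let $s \in R$ be the (two-sided) inverse of $r$ and write its homogeneous decomposition $s = \sum_{h \in G} s_h$ with $s_h \in R_h$ and all but finitely many $s_h$ equal to zero. Multiplying by $r \in R_g$ and using $R_g R_h \subseteq R_{gh}$ gives
\[
1 = rs = \sum_{h \in G} r s_h, \qquad r s_h \in R_{gh}.
\]
Since $1 \in R_e$ and the sum $R = \bigoplus_{g \in G} R_g$ is direct, comparing the component in $R_e$ forces $r s_{g^{-1}} = 1$, while $r s_h = 0$ for every $h \neq g^{-1}$.

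The identity $r s_{g^{-1}} = 1$ together with commutativity of $R$ says that $s_{g^{-1}}$ is a (two-sided) multiplicative inverse of $r$. By uniqueness of inverses, $s_{g^{-1}} = r^{-1}$, and since $s_{g^{-1}} \in R_{g^{-1}}$ by construction, we conclude $r^{-1} \in R_{g^{-1}}$. There is no real obstacle: the argument is entirely formal, the only point one needs to be careful about is invoking the direct-sum decomposition of $1$ rather than trying to match degrees in an ad hoc way.
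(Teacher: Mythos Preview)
Your argument is correct and complete. It differs from the paper's proof in one respect: the paper first invokes an external reference (\cite{Nastasescue}, Proposition~1.1.1) to assert that $r^{-1}$ is homogeneous, say $r^{-1}\in R_h$, and then uses $0\neq rr^{-1}=1\in R_e\cap R_{gh}$ to force $h=g^{-1}$. Your approach avoids the citation entirely: by decomposing $s=r^{-1}$ into homogeneous pieces and reading off the $R_e$-component of $rs=1$, you obtain $rs_{g^{-1}}=1$ and then use uniqueness of inverses to conclude $r^{-1}=s_{g^{-1}}\in R_{g^{-1}}$. In effect you reprove the homogeneity of $r^{-1}$ as a byproduct, making your argument self-contained, whereas the paper's version is shorter but relies on the black-box that inverses of homogeneous units are homogeneous.
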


\begin{proof}
By (\cite{Nastasescue}, Proposition 1.1.1), $r^{-1}\in h(R)$, which means that
$r^{-1}\in R_{h}$ for some $h\in G$. Now, $rr^{-1}=1\in R_{e}$ and $rr^{-1}\in
R_{g}R_{h}\subseteq R_{gh}$. So, $0\neq rr^{-1}\in R_{e}\cap R_{gh}$, which
implies that $gh=e$, that is $h=g^{-1}$. Hence, $r^{-1}\in R_{g^{-1}}$.
\end{proof}

\begin{exa}
Every graded field is a graded von Neumann regular ring. To see this, let $R$
be a graded field and $a\in R_{g}$. If $a=0$, then $x=0\in R_{g^{-1}}$
satisfies $a=a^{2}x$. If $a\neq0$, then $a$ is unit, and then by Lemma
\ref{1}, $x=a^{-1}\in R_{g^{-1}}$ with $a=a^{2}x$. Hence, $R$ is a graded von
Neumann regular ring.
\end{exa}

\begin{lem}
\label{2} If $R$ is a graded ring, then $R_{e}$ contains all homogeneous
idempotent elements of $R$.
\end{lem}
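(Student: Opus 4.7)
The plan is to start with an arbitrary homogeneous idempotent $a \in h(R)$ and show that its unique homogeneous degree must be the identity $e \in G$. By the definition of $h(R) = \bigcup_{g \in G} R_g$, there exists some $g \in G$ with $a \in R_g$. The idempotence $a = a^2$ together with the grading condition $R_g R_g \subseteq R_{g^2}$ then places $a$ simultaneously in $R_g$ and in $R_{g^2}$, and I want to exploit the directness of the decomposition $R = \bigoplus_{h \in G} R_h$ to conclude $g = g^2$, hence $g = e$.

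First, I would dispense with the trivial case $a = 0$, since $0 \in R_e$ automatically. Then I would assume $a \neq 0$, fix $g \in G$ with $a \in R_g$, and compute $a = a^2 \in R_g R_g \subseteq R_{g^2}$. At this point $a$ is a nonzero element lying in both homogeneous components $R_g$ and $R_{g^2}$. Because the sum $R = \bigoplus_{h \in G} R_h$ is direct, the intersection $R_g \cap R_{g^2}$ is trivial unless $g = g^2$; since $a \neq 0$ lies in both, we must have $g = g^2$ in $G$, which in any group forces $g = e$. Therefore $a \in R_e$, as required.

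The argument is essentially a one-line consequence of the definition of a graded ring, so I do not anticipate any serious obstacle; the only subtle point is to remember that the homogeneous decomposition is a direct sum, which is precisely what prevents a nonzero element from having two distinct degrees. No results from earlier in the paper are needed beyond the definitions of graded ring, homogeneous element, and the containment $R_g R_h \subseteq R_{gh}$ recalled in the introduction.
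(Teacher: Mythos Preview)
Your proof is correct and essentially identical to the paper's own argument: both handle the trivial case $a=0$ separately, then use $a=a^2\in R_gR_g\subseteq R_{g^2}$ together with the directness of the grading to force $g=g^2$ and hence $g=e$. No changes are needed.
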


\begin{proof}
Let $x\in h(R)$ be an idempotent element. Then $x\in R_{g}$ for some $g\in G$
and $x^{2}=x$. If $x=0$, then $x\in R_{e}$ and we are done. Suppose that
$x\neq0$. Since $x^{2}=x\cdot x\in R_{g}R_{g}\subseteq R_{g^{2}}$, $0\neq x\in
R_{g}\cap R_{g^{2}}$, and then $g^{2}=g$ which implies that $g=e$, and hence
$x\in R_{e}$.
\end{proof}

\begin{prop}
\label{3} Let $R$ be a graded ring. If $R$ is a Boolean ring, then $R$ is
trivially graded.
\end{prop}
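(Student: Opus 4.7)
The plan is to deduce the statement directly from Lemma \ref{2} together with the definition of a Boolean ring. Since $R$ is Boolean, every element (in particular, every homogeneous element) satisfies $x^{2}=x$, so every homogeneous element is idempotent. This gives an immediate supply of idempotents whose grades I can control.

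The key step will be: take any $g\in G$ and any $a\in R_{g}$. First I would observe that $a^{2}=a$ by the Boolean hypothesis, so $a$ is a homogeneous idempotent of $R$. By Lemma \ref{2}, every homogeneous idempotent lies in $R_{e}$, and therefore $a\in R_{e}$. This places $a$ in $R_{g}\cap R_{e}$.

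Next I would invoke the fact that the grading $R=\bigoplus_{h\in G}R_{h}$ is a direct sum of additive subgroups, so distinct components intersect trivially: $R_{g}\cap R_{e}=\{0\}$ whenever $g\neq e$. Combined with the previous step, this forces $a=0$ for every $a\in R_{g}$ with $g\neq e$, hence $R_{g}=\{0\}$. Thus $R=R_{e}$, which is the definition of being trivially graded.

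There is really no substantive obstacle here; the argument is a one-line application of Lemma \ref{2} followed by the directness of the sum. The only subtlety worth stating explicitly is the reminder that $R_{g}\cap R_{e}=\{0\}$ for $g\neq e$, so that the proof does not appear to rely on any unstated convention about the grading.
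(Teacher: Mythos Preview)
Your argument is correct and is essentially identical to the paper's own proof: both apply Lemma \ref{2} to conclude that an arbitrary homogeneous element $a\in R_{g}$ lies in $R_{e}$, and then use the directness of the grading to force $a=0$ when $g\neq e$. There is nothing to add.
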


\begin{proof}
It is enough to prove that $R_{g}=\{0\}$ for all $g\neq e$. Let $g\in G-\{e\}$
and $x\in R_{g}$. Since $R$ is Boolean, $x$ is an idempotent, and then $x\in
R_{e}$ by Lemma \ref{2}. So, $x\in R_{g}\cap R_{e}$ which implies the either
$x=0$ or $g=e$. Since $g\neq e$, $x=0$, and hence $R$ is trivially graded.
\end{proof}

\begin{exa}
Every Boolean graded ring is a graded von Neumann regular ring. To see this,
let $R$ be a Boolean graded ring. Then by Proposition \ref{3}, $R$ is
trivially graded. Assume that $a\in R_{g}$. If $g\neq e$, then $a=0$ and then
$x=0\in R_{g^{-1}}$ with $a=a^{2}x$. If $g=e$, then $a$ is an idempotent, and
then $x=a\in R_{e}=R_{g^{-1}}$ with $a^{2}x=ax=a\cdot a=a^{2}=a$. Hence, $R$
is a graded von Neumann regular ring.
\end{exa}

\begin{lem}
\label{4} Let $R$ be a graded von Neumann regular ring and $x\in h(R)$. Then
$Rx=Ra$ for some idempotent element $a\in R_{e}$.
\end{lem}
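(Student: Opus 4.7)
The plan is to build the required idempotent explicitly from the von Neumann regularity witness. Since $x\in h(R)$, fix $g\in G$ with $x\in R_g$. Applying the graded von Neumann regular hypothesis, choose $y\in R_{g^{-1}}$ such that $x=x^{2}y$, and set $a=xy$. The proof then reduces to verifying three things: $a\in R_e$, $a^{2}=a$, and $Rx=Ra$.

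First I would observe that $a=xy\in R_{g}R_{g^{-1}}\subseteq R_{e}$, so $a$ lies in the identity component (this is also consistent with Lemma \ref{2} once idempotence is checked). Next, using commutativity of $R$ and the identity $x=x^{2}y$,
\[
a^{2}=(xy)(xy)=(x^{2}y)y=xy=a,
\]
so $a$ is an idempotent of $R_e$.

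Finally I would establish the ideal equality in both directions. On the one hand, $a=xy\in Rx$ gives $Ra\subseteq Rx$. On the other hand, $x=x^{2}y=x(xy)=xa\in Ra$, so $Rx\subseteq Ra$. Combining these yields $Rx=Ra$ with $a\in R_{e}$ idempotent, as required.

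There is no real obstacle here: once the candidate $a=xy$ is chosen, the whole argument is forced by the regularity relation $x=x^{2}y$ together with commutativity and the grading $R_{g}R_{g^{-1}}\subseteq R_{e}$. The only point that genuinely uses the graded (as opposed to ordinary) hypothesis is the membership $a\in R_{e}$, which is exactly what makes this lemma the right ingredient for subsequent results about graded almost $1$-absorbing prime ideals.
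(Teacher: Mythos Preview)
Your proof is correct and essentially identical to the paper's own argument: both take $a=xy$ from the regularity witness $x=x^{2}y$, check $a\in R_{e}$ via $R_{g}R_{g^{-1}}\subseteq R_{e}$, verify $a^{2}=a$ by the same computation, and establish $Rx=Ra$ by the two inclusions $a=xy\in Rx$ and $x=xa\in Ra$.
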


\begin{proof}
Since $x\in h(R)$, $x\in R_{g}$ for some $g\in G$, and then there exists $y\in
R_{g^{-1}}$ such that $x=x^{2}y$ as $R$ is graded von Neumann regular. Choose
$a=xy$, then $a=xy\in R_{g}R_{g^{-1}}\subseteq R_{e}$, and $a^{2}=\left(
xy\right)  \cdot\left(  xy\right)  =(x^{2}y)y=xy=a$, which means that $a$ is
an idempotent. Now, $a=xy=yx\in Rx$, so $Ra\subseteq Rx$. On the other hand,
$x=x^{2}y=x(xy)=xa\in Ra$, so $Rx\subseteq Ra$. Hence, $Rx=Ra$.
\end{proof}

\begin{thm}
\label{5} Let $R$ be a graded von Neumann regular ring and $x\in h(R)$. Then
$Rx$ is a graded almost $1$-absorbing prime ideal of $R$.
\end{thm}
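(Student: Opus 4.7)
The plan is to exploit Lemma \ref{4} to reduce $Rx$ to an ideal generated by an idempotent, and then show that squaring does nothing. Recall that graded almost $1$-absorbing prime means graded $\phi_{2}$-$1$-absorbing prime, so we must verify that whenever $abc \in Rx - (Rx)^{2}$ for nonunit homogeneous elements $a,b,c$, then $ab \in Rx$ or $c \in Rx$. My strategy is to show that the set $Rx - (Rx)^{2}$ is actually empty, which trivializes the hypothesis and hence makes the defining condition vacuous.

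First I would invoke Lemma \ref{4} to write $Rx = Ra$ for some idempotent $a \in R_{e}$. Since $a$ is homogeneous of degree $e$ and $a^{2} = a$, a short computation gives
\[
(Rx)^{2} = (Ra)^{2} = R a^{2} = Ra = Rx.
\]
Consequently $Rx - (Rx)^{2} = \emptyset$. Therefore there are no triples $(a,b,c)$ of nonunit homogeneous elements with $abc \in Rx - (Rx)^{2}$, and the graded almost $1$-absorbing prime condition is satisfied vacuously. One should also note that $Rx$ is indeed a graded ideal, because $x \in h(R)$; the case in which $x$ is a unit (so that $Rx = R$ is not proper) is excluded by the implicit requirement that the ideal in question be proper.

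There is no real obstacle here: the whole argument hinges on the observation that in a graded von Neumann regular ring every principal homogeneous ideal is generated by an idempotent, and therefore equals its own square. The only minor subtlety is verifying that the idempotent produced by Lemma \ref{4} sits in the identity component $R_{e}$, which is precisely the content of Lemma \ref{4} and what allows the identity $(Ra)^{2} = Ra^{2}$ to be applied without degree-bookkeeping issues.
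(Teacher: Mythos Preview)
Your proof is correct and follows essentially the same route as the paper: both invoke Lemma~\ref{4} to write $Rx = Ra$ with $a \in R_{e}$ idempotent, deduce $(Rx)^{2} = Rx$, and conclude that the graded almost $1$-absorbing prime condition holds vacuously. The paper's version is terser (it simply cites the fact that $Rx$ is graded and then states $I^{2}=I$), but the argument is identical.
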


\begin{proof}
By [\cite{Dawwas Yildiz}, Lemma 1], $I=Rx$ is a graded ideal of $R$. By Lemma
\ref{4}, $I=Rx=Ra$ for some idempotent $a\in R_{e}$, and then $I^{2}=I$ which
implies that $I=Rx$ is a graded almost $1$-absorbing prime ideal of $R$.
\end{proof}

\begin{prop}
\label{6} Let $R$ be a graded von Neumann regular ring and $x\in h(R)$. Then
there exists an idempotent graded ideal $J$ of $R$ such that $R=Rx+J$ and
$Rx\cap J=\{0\}$.
\end{prop}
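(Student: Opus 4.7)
The plan is to leverage Lemma \ref{4} to reduce the problem to the case of an idempotent generator, then take $J$ to be the ideal generated by the complementary idempotent.

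First I would apply Lemma \ref{4} to write $Rx=Ra$ for some idempotent element $a\in R_{e}$. Set $b=1-a$. Since $1\in R_{e}$ and $a\in R_{e}$, we have $b\in R_{e}\subseteq h(R)$, and a direct computation gives $b^{2}=(1-a)^{2}=1-2a+a^{2}=1-a=b$, so $b$ is also idempotent. Define $J=Rb$. Because $b$ is homogeneous, $J$ is a graded ideal of $R$ (this is the content of [\cite{Dawwas Yildiz}, Lemma 1], already invoked in the proof of Theorem \ref{5}).

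Next I would verify the three required properties. For idempotence: since $R$ is commutative and $b^{2}=b$, one has $J^{2}=Rb\cdot Rb=Rb^{2}=Rb=J$. For the sum decomposition: any $r\in R$ satisfies $r=ra+r(1-a)=ra+rb$, so $r\in Ra+Rb=Rx+J$, giving $R=Rx+J$. For trivial intersection: if $y\in Rx\cap J=Ra\cap Rb$, write $y=sa=tb=t(1-a)$ for some $s,t\in R$; multiplying both expressions by $a$ and using $a^{2}=a$ and $a(1-a)=a-a^{2}=0$ yields $y=ya=sa\cdot a=sa^{2}=sa=y$ on one side and $ya=tb\cdot a=t(1-a)a=0$ on the other, so $y=ya=0$.

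There is essentially no main obstacle here: the entire argument is the standard Peirce decomposition argument associated with an idempotent, and the graded structure is preserved automatically because the idempotent $a$ (and hence $b=1-a$) lies in $R_{e}$. The only nontrivial input is Lemma \ref{4}, which does the work of producing the idempotent in the first place.
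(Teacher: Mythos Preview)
Your proof is correct and follows essentially the same approach as the paper: apply Lemma \ref{4} to replace $Rx$ by $Ra$ with $a\in R_e$ idempotent, take $J=R(1-a)$, and run the standard Peirce decomposition argument to obtain $R=Rx+J$ and $Rx\cap J=\{0\}$. The only cosmetic difference is that you spell out $J^{2}=Rb^{2}=Rb=J$ explicitly, whereas the paper simply notes that $1-a$ idempotent makes $J$ an idempotent ideal.
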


\begin{proof}
By Lemma \ref{4}, $Rx=Ra$ for some an idempotent $a\in R_{e}$. Choose
$J=R(1-a)$, then as $1-a\in R_{e}\subseteq h(R)$, $J$ is a graded ideal of $R$
by [\cite{Dawwas Yildiz}, Lemma 1]. Also, $(1-a)^{2}=1-2a+a^{2}=1-2a+a=1-a$
which means that $1-a$ is an idempotent, and so $J$ is an idempotent ideal.
Let $r\in R$. Then $r=ra+r(1-a)\in Ra+R(1-a)=Rx+J$, and hence $R=Rx+J$. Assume
that $y\in Rx\cap J=Ra\cap J$. Then $y=\alpha a$ and $y=\beta(1-a)$ for some
$\alpha,\beta\in R$. Now, $ya=\alpha a^{2}=\alpha a=y$, and $ya=\beta
(1-a)a=\beta a-\beta a^{2}=\beta a-\beta a=0$. So, $y=0$, and hence $Rx\cap
J=\{0\}$.
\end{proof}

\begin{cor}
If $R$ is a graded von Neumann regular ring, then $R$ is a direct sum of two
idempotent graded ideals of $R$.
\end{cor}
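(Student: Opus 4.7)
The plan is to deduce this immediately from Proposition \ref{6} together with Lemma \ref{4}. First I would pick any homogeneous element $x \in h(R)$; the convenient choices are $x = 1 \in R_{e}$ or any other element of $h(R)$ one prefers (the statement just asserts existence of such a decomposition). Applying Proposition \ref{6} to this $x$ produces an idempotent graded ideal $J$ of $R$ satisfying $R = Rx + J$ and $Rx \cap J = \{0\}$, so that $R = Rx \oplus J$ as an internal direct sum.

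Next I would check that the other summand $Rx$ is itself an idempotent graded ideal. By Lemma \ref{4}, $Rx = Ra$ for some idempotent element $a \in R_{e}$. Since $a \in R$ and $a = a \cdot a$, any element $ra \in Ra$ satisfies $ra = r a \cdot a = (ra)(a) \in (Ra)(Ra) = (Ra)^{2}$, so $Ra \subseteq (Ra)^{2}$; the reverse inclusion is automatic, giving $Rx = Ra = (Ra)^{2} = (Rx)^{2}$. In particular, $Rx$ is an idempotent graded ideal (it is graded by \cite{Dawwas Yildiz}, Lemma 1, or equivalently because $x \in h(R)$).

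Putting the two observations together, $R$ is expressed as the direct sum $Rx \oplus J$ of the two idempotent graded ideals $Rx$ and $J$, which is exactly what the corollary claims. There is no genuine obstacle: the only point that might look like it needs work is verifying that $Rx$ is idempotent, but this is forced the moment Lemma \ref{4} identifies $Rx$ with $Ra$ for an idempotent $a$. Thus the corollary is essentially a direct repackaging of Proposition \ref{6}.
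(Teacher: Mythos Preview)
Your proposal is correct and follows exactly the approach of the paper, whose proof reads in its entirety ``Apply Proposition \ref{6} and Lemma \ref{4}.'' You have simply unpacked those two citations, including the verification that $Rx = Ra$ is idempotent, which is precisely the intended argument.
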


\begin{proof}
Apply Proposition \ref{6} and Lemma \ref{4}.
\end{proof}

\end{document}